\newtheorem{theorem}{Theorem}[subsection]
\newtheorem{lemma}[theorem]{Lemma}
\newtheorem{proposition}[theorem]{Proposition}
\theoremstyle{definition}
\newtheorem{definition}[theorem]{Definition}
\newtheorem{example}[theorem]{Example}
\newtheorem{note}[theorem]{Note}
\newtheorem{question}[theorem]{Question}
\newtheorem*{acknowledgements*}{Acknowledgements}
\title{Nondegenerate extensions of near-group \\ braided fusion categories}
\author{Andrew Schopieray}
\date{\small
    	Department of Mathematical and Statistical Sciences\\
	CAB 632 \\
	University of Alberta \\
	Edmonton, Alberta \\
	Canada T6G 2G1 \\
	\texttt{schopier@ualberta.ca}
}
\begin{document}

\maketitle

\begin{abstract}
This is a study of weakly integral braided fusion categories with elementary fusion rules to determine which possess nondegenerately braided extensions of theoretically minimal dimension, or equivalently in this case, which satisfy the minimal modular extension conjecture.  We classify near-group braided fusion categories satisfying the minimal modular extension conjecture; the remaining Tambara-Yamagami braided fusion categories provide arbitrarily large families of braided fusion categories with identical fusion rules violating the minimal modular extension conjecture.  These examples generalize to braided fusion categories with the fusion rules of the representation categories of extraspecial $p$-groups for any prime $p$, which possess a minimal modular extension only if they arise as the adjoint subcategory of a twisted double of an extraspecial $p$-group.
\end{abstract}

\section{Introduction}

A braided fusion category embeds into its center, or \emph{double} \cite[Definition 7.13.1]{tcat}, allowing specialized results about nondegenerately braided fusion categories to be used in generality.  Unfortunately, the double is a large construction.  The categorical and Frobenius-Perron dimensions are squared, and the rank of the double has yet to be bound in terms of the rank of the original.  So given a braided fusion category $\mathcal{C}$, it is desirable to seek nondegenerately braided fusion categories $\mathcal{D}$ containing $\mathcal{C}$ as a braided fusion subcategory which are smaller than the double in any sense.  For dimension, the theoretical minimum that can be accomplished is $\mathrm{FPdim}(\mathcal{D})=\mathrm{FPdim}(\mathcal{C})\mathrm{FPdim}(C_\mathcal{C}(\mathcal{C}))$ where $C_\mathcal{C}(\mathcal{C})$ is the symmetric center of $\mathcal{C}$ (see Section \ref{braid}).  Whether a nondegenerately braided extension of theoretically minimal dimension exists is a natural question which is approximately 20 years old now \cite[Conjecture 5.2]{mug1}.  This question is often called the \emph{minimal modular extension conjecture} as it was originally stated with the assumption of a spherical structure; the minimal modular extension conjecture is false with or without this assumption.  The first counterexamples were explained by V.\ Drinfeld in private communications which have not appeared in print at this time.  Obstructions to the existence of certain minimal modular extensions may be found in cohomological data of finite groups and this line of reasoning was used by C.\ Galindo and C.\ F.\ Venegas-Ram\'irez in \cite[Section 4.3]{galindo2017categorical} to provide novel counterexamples to the minimal modular extension conjecture.  The smallest of these counterexamples \cite[Proposition 4.11]{galindo2017categorical} is a fusion category of rank 5 and dimension 8 which is easily seen to be the representation category of the dihedral group of order 8 with a nonsymmetric braiding.  In Proposition \ref{bigprop} we prove this is the first in an infinite family of counterexamples coming from the Tambara-Yamagami braided fusion categories.  One way in which these arguments are novel is that they only rely on the fusion rules of the category, giving arbitrarily many examples with the same fusion rules as their rank is increased.  The Tambara-Yamagami story is a large portion of the proof of Theorem \ref{theorem}: a classification of near-group braided fusion categories which possess a minimal modular extension.  With 5 exceptions (Figure \ref{fig:onetwotwo}) these are realized as the unique fusion subcategory of a product of Ising braided fusion categories $\otimes$-generated by the simple object of maximal dimension.

\begin{figure}[H]
\centering
\begin{equation*}
\begin{array}{|c|ccc|}
\hline \mathcal{C} & \mathrm{rank}(\mathcal{C}) & \mathrm{FPdim}(\mathcal{C}) & \mathrm{FPdim}(C_\mathcal{C}(\mathcal{C}))\\\hline\hline
\mathcal{C}(\mathfrak{sl}_2,4)_\mathrm{ad} & 3 &6 & 2  \\
\mathcal{C}(\mathfrak{sl}_2,4)^\mathrm{rev}_\mathrm{ad} & 3 & 6 & 2 \\
\mathcal{Z}(\mathrm{Vec}^\gamma_{Q_8})_\mathrm{ad} & 5 & 8 & 4  \\
\mathcal{Z}(\mathrm{Vec}^\gamma_{Q_8})^\mathrm{rev}_\mathrm{ad} & 5 &8 & 4 \\
\mathcal{C}(\mathfrak{sl}_3,3)_\mathrm{ad} & 4 & 12 & 3 \\
\hline
\end{array}
\end{equation*}
    \caption{Exceptional near-group braided fusion categories which possess minimal modular extensions; $\gamma$ is any generator of $H^3(Q_8,\mathbb{C}^\times)$}%
    \label{fig:onetwotwo}%
\end{figure}

\par The fusion rules of integral Tambara-Yamagami braided fusion categories coincide with the character rings of extraspecial $2$-groups (see Section \ref{extraspecial}).  In general, for each prime $p$ and $n\in\mathbb{Z}_{\geq1}$, there exist nonsymmetrically braided fusion categories whose fusion rules coincide with the character rings of $p_{\pm}^{1+2n}$, where $p_{\pm}^{1+2n}$ is either of the extraspecial $p$-groups of order $p^{2n+1}$.  It is an open problem to classify fusion categories with these fusion rules for $p\neq2$, as well as to classify the compatible braidings.  One computational hurdle is exponentially increasing multiplicities in the fusion rules as $n$ increases.  This problem is of independent interest, but these nonsymmetrically braided fusion categories should provide yet another infinite class of counterexamples to the minimal modular extension conjecture.  We prove in Proposition \ref{3andon} that a nonsymmetrically braided fusion category whose fusion rules coincide with those of the character ring of $p_{\pm}^{1+2n}$ for some $p\neq2$ and $n\in\mathbb{Z}_{\geq1}$ possesses a nondegenerately braided extension of theoretically minimal dimension only if it arises as the adjoint subcategory of a twisted double of an extraspecial $p$-group.

\par Our exposition is divided into five additional sections.  Section \ref{pre} describes the notation and vocabulary used in the remainder of the sections and provides the reader with further resources.  Section \ref{interesting} describes nondegenerately braided extensions for $(\mathbb{Z}/2\mathbb{Z})^{\oplus2}$ Tambara-Yamagami braided fusion categories in explicit detail, as this is the case where two important exceptions occur.  The goal of Section \ref{jen} is to prove that $(\mathbb{Z}/2\mathbb{Z})^{\oplus n}$ Tambara-Yamagami categories are more uniform for $n>1$, with a large proportion lacking minimal modular extensions.  Section \ref{near} utilizes the existing classification of braided near-group fusion categories due to J.\ Thornton \cite[Theorem III.4.6]{MR3078486} to give a complete classification of braided near-group fusion categories satisfying the minimal modular extension conjecture.  Lastly, Section \ref{futuro} contains preliminary results which should allow the content of Section \ref{jen} to be generalized to arbitrary extraspecial $p$-groups.


\section{Preliminaries}\label{pre}

\subsection{Braided fusion categories}\label{braid}

The basic object in what follows is a fusion category \cite[Section 2]{ENO}.  Our exposition roughly follows the notation and language used in the standard textbook \cite{tcat} which we will periodically cite.  Fusion categories (over $\mathbb{C}$) are $\mathbb{C}$-linear, semisimple, rigid monoidal categories (with product $\otimes$, unit $\mathbbm{1}$, and duality $^\ast$), which have finitely many isomorphism classes of simple objects and simple monoidal unit.  The canonical examples of fusion categories are $\mathrm{Vec}_G$, the category of $G$-graded complex vector spaces, and $\mathrm{Rep}(G)$, the category of finite-dimensional complex representations of $G$, for any finite group $G$.  The concepts in the remainder of this paragraph are based on the underlying Grothendieck ring of a fusion category \cite[Chapter 3]{tcat}.  We denote the set of isomorphism classes of simple objects of a fusion category $\mathcal{C}$ by $\mathcal{O}(\mathcal{C})$.  The decomposition of $x\otimes y$ into simple objects for any $x,y\in\mathcal{O}(\mathcal{C})$ are referred to as the fusion rules of $\mathcal{C}$, and are encoded in the fusion matrices $N_x:=(N_{x,y}^z)_{y,z\in\mathcal{O}(\mathcal{C})}$ where $N_{x,y}^z:=\dim_\mathbb{C}\mathrm{Hom}_\mathcal{C}(x\otimes y,z)$.  The largest real eigenvalue of $N_x$ is known as the Frobenius-Perron dimension of $x$, or $\mathrm{FPdim}(x)$ for brevity, while the sum of $\mathrm{FPdim}(x)^2$ over all $x\in\mathcal{O}(\mathcal{C})$ will be denoted $\mathrm{FPdim}(\mathcal{C})$.  A simple object $x\in\mathcal{O}(\mathcal{C})$ is called invertible if $\mathrm{FPdim}(x)=1$ which implies $x\otimes y\in\mathcal{O}(\mathcal{C})$ for all $y\in\mathcal{O}(\mathcal{C})$.  When all $x\in\mathcal{O}(\mathcal{C})$ are invertible, we say $\mathcal{C}$ is pointed, and when $\mathrm{FPdim}(x)\in\mathbb{Z}$ for all simple $x$ in $\mathcal{C}$, we say $\mathcal{C}$ is integral.  But in general, $\mathcal{C}_\mathrm{pt}$ and $\mathcal{C}_\mathbb{Q}$ will be the maximal pointed and integral fusion subcategories of $\mathcal{C}$, respectively.

\begin{note}
We use the notation $\mathcal{C}_\mathbb{Q}$ in lieu of $\mathcal{C}_\mathbb{Z}$ since if $\mathbb{K}$ is any algebraic number field, the objects $x\in\mathcal{C}$ such that $\mathrm{FPdim}(x)\in\mathbb{K}$ form $\mathcal{C}_\mathbb{K}$, a fusion subcategory of $\mathcal{C}$ by \cite[Proposition 1.6]{2019arXiv191212260G}.  The fact that Frobenius-Perron dimensions are algebraic integers in these fields follows trivially from the definition.
\end{note}

The fusion categories $\mathrm{Vec}_G$ for finite groups $G$ are both integral and pointed, while the fusion categories $\mathrm{Rep}(G)$ are just integral unless $G$ is abelian.  Both of these families of examples have commutative fusion rules.  Moreso, for all objects $x,y$, one can choose natural isomorphisms $c_{x,y}:x\otimes y\to y\otimes x$ satisfying braid-like compatibilities; a fusion category along with a choice of natural isomorphisms $\{c_{x,y}\}_{x,y\in\mathcal{C}}$ satisfying the conditions in \cite[Definition 8.1.1]{tcat} is known as a braided fusion category.  If $\{c_{x,y}\}_{x,y\in\mathcal{C}}$ is a braiding on a fusion category $\mathcal{C}$, then $\{c_{y,x}^{-1}\}_{x,y\in\mathcal{C}}$ is also a braiding on $\mathcal{C}$; we denote this braided fusion category by $\mathcal{C}^\mathrm{rev}$.

\par If $\rho:G\to\mathrm{GL}(V)$ and $\sigma:G\to\mathrm{GL}(W)$ are finite-dimensional complex representations of a finite group $G$, then $\rho\otimes\sigma$ and $\sigma\otimes\rho$ are naturally isomorphic by simple transposition of bases of $V\otimes W$ and $W\otimes V$.  If we denote each transposition by $c_{\rho,\sigma}$, then $\mathrm{Rep}(G)$ along with $\{c_{\rho,\sigma}\}_{\rho,\sigma\in \mathrm{Rep}(G)}$ is a braided fusion category.  In this case $c_{\sigma,\rho}c_{\rho,\sigma}=\mathrm{id}_{\rho,\sigma}$ for all $\rho,\sigma\in\mathrm{Rep}(G)$ and we say that $\mathrm{Rep}(G)$ with this particular braiding is symmetrically braided \cite[Section 9.9]{tcat}.  But there are potentially many other symmetric braidings one can equip the fusion category $\mathrm{Rep}(G)$ with.  In particular, if $z\in Z(G)$ has order 2, then one can twist the standard symmetric braiding by the parity of the action of $z$ to produce another symmetric braiding \cite[Example 9.9.1]{tcat}.  To differentiate between these braided fusion categories, we will denote them by $\mathrm{Rep}(G,z)$, and $\mathrm{Rep}(G)$ will be reserved solely for the underlying fusion category.  For uniformity, $\mathrm{Rep}(G)$ equipped with the trivial symmetric braiding will be denoted $\mathrm{Rep}(G,e)$ and we refer to any braided fusion category braided equivalent to $\mathrm{Rep}(G,e)$ as Tannakian.

\begin{note}
It is crucial to emphasize the difference between an equivalence of fusion categories and an equivalence of braided fusion categories \cite[Definition 8.1.7]{tcat} which is a strictly stronger condition.  For example, \cite[Corollary 9.9.25]{tcat} states that if $\mathcal{C}$ is a symmetrically braided fusion category, then $\mathcal{C}$ is braided equivalent to $\mathrm{Rep}(G,z)$ for some finite group $G$ and $z\in Z(G)$ such that $z^2=e$.  But there are multitudes of examples of braided fusion categories $\mathcal{C}$ such that $\mathcal{C}$ is equivalent to $\mathrm{Rep}(G)$ as a fusion category for a finite group $G$ but $\mathcal{C}$ is not symmetrically braided (Section \ref{TY}).
\end{note}

Any fusion subcategory $\mathcal{D}\subset\mathcal{C}$ of a braided fusion category $\mathcal{C}$ is a braided fusion category with the braiding restricted to $\mathcal{D}$ from $\mathcal{C}$.  We define the relative centralizer of $\mathcal{D}$ in $\mathcal{C}$, denoted $C_\mathcal{C}(\mathcal{D})$, as the full subcategory of $x\in\mathcal{C}$ such that $c_{y,x}c_{x,y}=\mathrm{id}_{x\otimes y}$ for all $y\in\mathcal{C}$.  The special case of $C_\mathcal{C}(\mathcal{C})$ is known as the symmetric center of $\mathcal{C}$ as it is clearly a symmetrically braided fusion subcategory of $\mathcal{C}$.  In much of the existing literature, the symmetric center of $\mathcal{C}$ is simply denoted $\mathcal{C}'$ but we will avoid this notation in the remainder of the exposition as the apostrophe is an overburdened symbol in mathematics.  The condition of a braiding being symmetric can be restated as $\mathcal{O}(C_\mathcal{C}(\mathcal{C}))=\mathcal{O}(\mathcal{C})$, while we refer to braided fusion categories such that $\mathcal{O}(C_\mathcal{C}(\mathcal{C}))=\{\mathbbm{1}\}$ as \emph{nondegenerately braided}.  Of most importance in what follows is that if $\mathcal{D}$ is a braided fusion category and $\mathcal{C}\subset\mathcal{D}$ a fusion subcategory, then \cite[Theorem 8.21.5]{tcat}
\begin{equation}
\mathrm{FPdim}(\mathcal{C})\mathrm{FPdim}(C_\mathcal{D}(\mathcal{C}))=\mathrm{FPdim}(\mathcal{D})\mathrm{FPdim}(\mathcal{C}\cap C_\mathcal{D}(\mathcal{D})).\label{wanpisu}
\end{equation}


\subsection{Covers and extensions}\label{covers}

For a fixed braided fusion category, often one wants to consider categories containing the original which are larger but have more convenient properties.  In this \emph{laissez-faire} approach, the larger category will be called a \emph{cover} because the larger category has been loosely thrown on top without any specific information about how the smaller category is being contained.  When control is needed, the data of a cover along with specific instructions on how to connect the category to its cover will be called an \emph{extension}.

\begin{definition}
Let $\mathcal{C}$ be a braided fusion category.  If $\mathcal{D}$ is a braided fusion category and there exists a fully faithful braided tensor functor $\iota:\mathcal{C}\hookrightarrow\mathcal{D}$, then $\mathcal{D}$ is a \emph{cover} of $\mathcal{C}$, while the pair $(\mathcal{D},\iota)$ is an \emph{extension} of $\mathcal{C}$.
\end{definition}

Adjectives which apply to braided fusion categories apply to covers and extensions in a natural way.  For example, a \emph{nondegenerate cover} of $\mathcal{C}$ would be a nondegenerately braided fusion category $\mathcal{D}$ which is also a cover of $\mathcal{C}$. 

\begin{definition}\label{def}
Let $\mathcal{C}$ be a braided fusion category with extensions $(\mathcal{D},\iota)$ and $(\mathcal{E},\kappa)$.  A braided equivalence $F:\mathcal{D}\to\mathcal{E}$ is an \emph{equivalence of covers}.  If $F\circ\iota$ and $\kappa$ are naturally isomorphic as braided tensor functors, we say that $F$ is an \emph{equivalence of extensions}.
\end{definition}

Our description of extensions in Definition \ref{def} is essentially taken from \cite[Definition 4.1]{MR3613518} which emphasizes the importance of the embedding.  The original definition \cite[Conjecture 5.2]{mug1} in the setting of modular tensor categories aligns with our definition of a cover.  In a majority of the literature, there is a strong emphasis on studying nondegenerate covers or extensions $\mathcal{C}\subset\mathcal{D}$ which have the smallest possible Frobenius-Perron dimension, which by Equation (\ref{wanpisu}) is $\mathrm{FPdim}(\mathcal{D})=\mathrm{FPdim}(\mathcal{C})\mathrm{FPdim}(C_\mathcal{C}(\mathcal{C}))$.  This definition is equivalent to ensuring that no simple object is added in the cover which centralizes the original category.  In this way, such covers can be thought of as the optimal vehicle for applying the results of nondegenerately braided fusion categories to arbitrary braided fuson categories.

\begin{lemma}\label{previous}
Let $\mathcal{D}$ be a nondegenerate cover of a braided fusion category $\mathcal{C}$.  Then $\mathrm{FPdim}(\mathcal{D})=\mathrm{FPdim}(\mathcal{C})\mathrm{FPdim}(C_\mathcal{C}(\mathcal{C}))$ if and only if $C_\mathcal{D}(\mathcal{C})=C_\mathcal{C}(\mathcal{C})$.
\end{lemma}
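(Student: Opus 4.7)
The plan is to use Equation (\ref{wanpisu}) as the central tool, specialized to the fusion subcategory $\mathcal{C}\subseteq\mathcal{D}$. Because $\mathcal{D}$ is nondegenerately braided, $\mathcal{O}(C_\mathcal{D}(\mathcal{D}))=\{\mathbbm{1}\}$, so $\mathcal{C}\cap C_\mathcal{D}(\mathcal{D})$ is the trivial fusion subcategory and $\mathrm{FPdim}(\mathcal{C}\cap C_\mathcal{D}(\mathcal{D}))=1$. Substituting into (\ref{wanpisu}) gives the clean identity
\begin{equation*}
\mathrm{FPdim}(\mathcal{C})\,\mathrm{FPdim}(C_\mathcal{D}(\mathcal{C}))=\mathrm{FPdim}(\mathcal{D}).
\end{equation*}
Thus the hypothesized dimension formula $\mathrm{FPdim}(\mathcal{D})=\mathrm{FPdim}(\mathcal{C})\,\mathrm{FPdim}(C_\mathcal{C}(\mathcal{C}))$ is equivalent to the scalar identity $\mathrm{FPdim}(C_\mathcal{D}(\mathcal{C}))=\mathrm{FPdim}(C_\mathcal{C}(\mathcal{C}))$, so the whole lemma reduces to upgrading this numerical equality to an equality of subcategories.

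Next I would verify the containment $C_\mathcal{C}(\mathcal{C})\subseteq C_\mathcal{D}(\mathcal{C})$. This is just unwinding definitions: the braiding on $\mathcal{C}$ is the restriction of the braiding on $\mathcal{D}$ via the embedding $\iota$, so any $x\in\mathcal{C}$ satisfying $c_{y,x}c_{x,y}=\mathrm{id}$ for every $y\in\mathcal{C}$ continues to satisfy this equation when regarded inside $\mathcal{D}$. (In fact the same unpacking shows $C_\mathcal{C}(\mathcal{C})=\mathcal{C}\cap C_\mathcal{D}(\mathcal{C})$, though only the inclusion is needed.)

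Finally, I would combine the two observations. Since $C_\mathcal{C}(\mathcal{C})\subseteq C_\mathcal{D}(\mathcal{C})$ is an inclusion of fusion subcategories, equality of their Frobenius--Perron dimensions forces equality of the underlying categories (the inclusion of simple-object sets together with the matching sum $\sum\mathrm{FPdim}(x)^2$ compels $\mathcal{O}(C_\mathcal{C}(\mathcal{C}))=\mathcal{O}(C_\mathcal{D}(\mathcal{C}))$). Combining this with the first paragraph yields the biconditional in both directions.

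There is no real obstacle here: the lemma is essentially an immediate corollary of (\ref{wanpisu}) once the nondegeneracy of $\mathcal{D}$ is used to kill the factor $\mathrm{FPdim}(\mathcal{C}\cap C_\mathcal{D}(\mathcal{D}))$. The only point requiring minor care is ensuring that "equal dimension implies equal subcategory" inside the ambient $\mathcal{D}$, which is standard for fusion subcategories.
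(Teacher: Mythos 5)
Your proof is correct and follows essentially the same route as the paper: apply Equation (\ref{wanpisu}) to $\mathcal{C}\subset\mathcal{D}$, use nondegeneracy to reduce it to $\mathrm{FPdim}(\mathcal{C})\,\mathrm{FPdim}(C_\mathcal{D}(\mathcal{C}))=\mathrm{FPdim}(\mathcal{D})$, and combine with the containment $C_\mathcal{C}(\mathcal{C})\subseteq C_\mathcal{D}(\mathcal{C})$ so that equality of Frobenius--Perron dimensions forces equality of the subcategories. The only difference is that you spell out the two background facts (triviality of $\mathcal{C}\cap C_\mathcal{D}(\mathcal{D})$ and the ``equal dimension implies equal subcategory'' step) that the paper leaves implicit.
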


\begin{proof}
It is clear that $C_\mathcal{C}(\mathcal{C})\subset C_\mathcal{D}(\mathcal{C})$ from definition.  But $\mathrm{FPdim}(C_\mathcal{D}(\mathcal{C}))\mathrm{FPdim}(\mathcal{C})=\mathrm{FPdim}(\mathcal{D})$ by Equation (\ref{wanpisu}) applied to $\mathcal{C}\subset\mathcal{D}$.  Therefore $\mathrm{FPdim}(\mathcal{D})=\mathrm{FPdim}(\mathcal{C})\mathrm{FPdim}(C_\mathcal{C}(\mathcal{C}))$ if and only if $\mathrm{FPdim}(C_\mathcal{C}(\mathcal{C}))=\mathrm{FPdim}(C_\mathcal{D}(\mathcal{C}))$, i.e.\ if and only if $C_\mathcal{C}(\mathcal{C})=C_\mathcal{D}(\mathcal{C})$.
\end{proof}

In the literature thus far, extensions satisfying the hypotheses of Lemma \ref{previous} have been referred to as minimal nondegenerate extensions or minimal modular extensions, with the addition of spherical structures (see Section \ref{modular}).  It has been shown that these extensions have additional algebraic structure in the unitary setting \cite[Theorem 1.1]{MR3613518} as well as physical meaning in the study of topological phases of matter \cite[Section 2]{MR3613518}.  But for many braided fusion cateogries the entire study of minimal nondegenerate extensions in this sense is vaccuous, as none exist.  We will retain this entrenched language for both covers and extensions with a hope that the notion of minimal extensions will eventually apply to all braided fusion categories.

%
%
%


\subsection{Graded extensions and modular data}\label{modular}

There are two structural tools for fusion and braided fusion categories that will be used frequently.

\par Firstly \cite[Section 4.14]{tcat}, each fusion category $\mathcal{C}$ (moreover, every fusion ring) possesses a universal grading, i.e.\ an additive decomposition of $\mathcal{C}$ by graded components corresponding to elements of a finite group $G$, such that the fusion rules of $\mathcal{C}$ respect the operation of $G$.  This grading is faithful in the sense that no graded component is empty, and we will refer to $G$ as the universal grading group of $\mathcal{C}$.  It follows from the definition of the universal grading that the trivial component, the fusion subcategory of $\mathcal{C}$ $\otimes$-generated by $x\otimes x^\ast$ for all $x\in\mathcal{O}(\mathcal{C})$, is a fusion subcategory which we denote by $\mathcal{C}_\mathrm{ad}$ and refer to as the adjoint subcategory \cite[Section 4.14]{tcat}.  In this way, if the universal grading group of $\mathcal{C}$ is a finite group $G$, we say $\mathcal{C}$ is a $G$-graded extension of $\mathcal{C}_\mathrm{ad}$.   If a fusion category $\mathcal{C}$ is faithfully graded by a finite group $G$, denote the graded components $\mathcal{C}_g$ for $g\in G$.  We will repeatedly use the fact that all graded components are the same dimension \cite[Theorem 3.5.2]{tcat}.  In particular, $\mathrm{FPdim}(\mathcal{C}_\mathrm{ad})=\sum_{x\in\mathcal{O}(\mathcal{C})\cap\mathcal{C}_g}\mathrm{FPdim}(x)^2$ for all $g\in G$ and moreover $\mathrm{FPdim}(\mathcal{C})=|G|\mathrm{FPdim}(\mathcal{C}_\mathrm{ad})$.

\par Secondly, if $\mathcal{C}$ is a fusion category and $\mathrm{FPdim}(\mathcal{C})\in\mathbb{Z}$ (i.e.\ $\mathcal{C}$ is weakly integral), then $\mathcal{C}$ possesses a canonical positive spherical structure \cite[Corollary 9.6.6]{tcat}.  In practice, we only need this structure to allow us to take traces of endomorphisms in $\mathcal{C}$, producing numerical constraints in our proofs.  The positivity of the canonical spherical structure corresponds to the fact that $\dim(x)$, the traces of the identity morphisms on $x\in\mathcal{O}(\mathcal{C})$, are precisely $\mathrm{FPdim}(x)$.  This implies $\mathrm{FPdim}(\mathcal{C})=\dim(\mathcal{C}):=\sum_{x\in\mathcal{O}(\mathcal{C})}\dim(x)^2$ hence in the setting of weakly integral fusion categories we will use $\mathrm{FPdim}$ and $\dim$ interchangeably, while for arbitrary fusion categories we will only use $\mathrm{FPdim}$.  We will consider nondegenerately braided fusion categories as modular tensor categories \cite[Definition 8.13.4]{tcat} equipped with their unique positive spherical structure, when it suits our purposes, in order to utilize their modular data \cite[Section 8.17]{tcat}.  Conveniently, the universal grading group of a modular tensor category $\mathcal{C}$ is canonically isomorphic to $\mathcal{O}(\mathcal{C}_\mathrm{pt})$ \cite[Theorem 6.3]{nilgelaki}.  The modular data of a modular tensor category is an invertible $|\mathcal{O}(\mathcal{C})|\times|\mathcal{O}(\mathcal{C})|$ matrix $S=(S_{x,y})_{x,y\in\mathcal{O}(\mathcal{C})}$ consisting of the traces of the double-braidings \cite[Section 8.13]{tcat}, and a diagonal matrix with diagonal elements $(\theta_x)_{x\in\mathcal{O}(\mathcal{C})}$ where $\theta_x$ is the trace of the ribbon structure $\mathcal{C}$.  The matrix $S$ is unitary \cite[Proposition 2.12]{ENO}; we will refer to this fact as orthogonality relations between simple objects of $\mathcal{C}$.  Specifically, if $x,y\in\mathcal{O}(\mathcal{C})$, then $\sum_{z\in\mathcal{O}(\mathcal{C})}S_{x,z}\overline{S_{y,z}}=\dim(\mathcal{C})$ if $x\cong y$, and is zero otherwise, using $\overline{\alpha}$ as the complex conjugate of $\alpha\in\mathbb{C}$.  It is known that $\theta_x$ are roots of unity for all $x\in\mathcal{O}(\mathcal{C})$ \cite[Corollary 8.18.2]{tcat} and we will refer to the order of the $T$-matrix as the conductor of $\mathcal{C}$.  Two formulas that will be used in our proofs are the Verlinde formula \cite[Corollary 8.14.4]{tcat}, which states that the fusion rules of a modular tensor category are given by the $S$-matrix via
\begin{equation}
\dim(\mathcal{C})N_{y,z}^w=\sum_{x\in\mathcal{O}(\mathcal{C})}\dfrac{S_{x,y}S_{x,z}\overline{S_{x,w}}}{\dim(x)}
\end{equation}
for all $w,y,z\in\mathcal{O}(\mathcal{C})$, and the balancing equation \cite[Proposition 8.13.8]{tcat}, which states that even without nondegeneracy of the braiding,
\begin{equation}
S_{x,y}=\theta_x^{-1}\theta_y^{-1}\sum_{z\in\mathcal{O}(\mathcal{C})}N_{x,y}^z\theta_z\dim(z).
\end{equation}
The last benefit of the $S$-matrix is that it allows a numerical test for two objects $x,y$ in a spherical braided fusion category $\mathcal{C}$ to centralize one another, i.e.\ $c_{y,x}c_{x,y}=\mathrm{id}_{x\otimes y}$.  By taking the trace of these endomorphisms we can see $x,y$ centralizing one another implies $S_{x,y}=\dim(x)\dim(y)$, and the converse is also true \cite[Proposition 2.5]{mug1}.

\subsection{Extraspecial $p$-groups and their character rings}\label{extraspecial}

Let $p$ be a prime integer.  Recall that a finite group $G$ is a $p$-group if the order of $G$ is a power of $p$.  A $p$-group $G$ is \emph{extraspecial} if $|Z(G)|=p$ and $G/Z(G)$ is a non-trivial elementary abelian $p$-group, i.e.\ isomorphic to $(\mathbb{Z}/p\mathbb{Z})^{\oplus n}$ for some $n\in\mathbb{Z}_{\geq1}$.  For each prime integer $p$ and positive integer $n$, there exist exactly two isomorphism classes of extraspecial $p$-groups of order $p^{2n+1}$ which are traditionally denoted $p^{1+2n}_\pm$.  There do not exist extraspecial $p$-groups whose order is an even power of $p$.  All extraspecial $p$-groups can be constructed as central products of the two extraspecial $p$-groups of order $p^3$.  For example, the extraspecial $2$-groups of order 8 are the dihedral group $D_4=2^{1+2}_+$ and the quaternion group $Q_8=2^{1+2}_-$.  Therefore, all extraspecial $2$-groups are just central products of various copies of $D_4$ and $Q_8$.
\par The representation theory of extraspecial $p$-groups is straightforward as $p^{2n}$ isomorphism classes of irreducible representations of $p^{1+2n}_\pm$ are one-dimensional, corresponding to the elementary abelian $p$-group underlying $p^{1+2n}_\pm$.  We will abuse notation and denote these classes by $g$ for $g\in (\mathbb{Z}/p\mathbb{Z})^{\oplus 2n}$ whose fusion rules are simply those of the linear characters of $(\mathbb{Z}/p\mathbb{Z})^{\oplus 2n}$.  The remaining $p-1$ isomorphism classes of irreducible representations are faithful of dimension $p^n$, and are distinguished by their values on $Z(p^{1+2n}_\pm)$.  Denote these classes by $x_g$ for $g\in(\mathbb{Z}/p\mathbb{Z})^\times$.  The fusion rules involving $x_h$ are then
\begin{align}\label{too}
&g\otimes x_h\cong x_h\otimes g\cong x_h,\,\,\,\text{ for all }g\in (\mathbb{Z}/p\mathbb{Z})^{\oplus 2n},h\in(\mathbb{Z}/p\mathbb{Z})^\times  \\
&x_g\otimes x_h\cong p^nx_{gh}\,\,\,\text{ if }gh\neq e,\text{ and } \\
&x_g\otimes x_{g^{-1}}\cong\bigoplus_{h\in (\mathbb{Z}/p\mathbb{Z})^{\oplus 2n}} h.
\end{align}
Extraspecial $p$-groups are characterized by the degrees of their characters.  This result can be found in standard textbooks on the character theory of finite groups such as \cite[Proposition 7.7]{MR1645304}.  For our purposes, we will only need a trivial corollary of this fact.

\begin{lemma}\label{extraspeciallem}
Let $R$ be the character ring of an extraspecial $p$-group.  If $G$ is a finite group whose character ring is isomorphic to $R$, then $G$ is an extraspecial $p$-group.
\end{lemma}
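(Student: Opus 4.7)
The plan is to extract the multiset of character degrees of $G$ purely from the ring structure of $R$, and then invoke the characterization of extraspecial $p$-groups by their character degrees cited in the paragraph above the lemma.

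First I would recall that for any finite group $H$, the Frobenius-Perron dimension of an irreducible character $\chi$, viewed as a simple object of the fusion ring underlying $\mathrm{Rep}(H)$, equals $\chi(e)$. This is because the regular representation is the Frobenius-Perron eigenvector of every fusion matrix, with eigenvalue $\chi(e)$. Hence the multiset of character degrees of $H$ is an invariant of the character ring, i.e.\ it depends only on the isomorphism class of that ring.

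Next I would read the degree data straight off $R$: the fusion rules \eqref{too} and the two displays that follow show that $R$ has exactly $p^{2n}$ simple objects of FPdim $1$ and exactly $p-1$ simple objects of FPdim $p^n$. Any $G$ whose character ring is isomorphic to $R$ must therefore have the same number of irreducible characters of each degree, and so
\[
|G|=\sum_{\chi\in\mathrm{Irr}(G)}\chi(e)^2=p^{2n}\cdot 1+(p-1)\cdot p^{2n}=p^{2n+1}.
\]
In particular $G$ is a $p$-group all of whose irreducible character degrees lie in $\{1,p^n\}$, with the center of its group algebra having dimension $p^{2n}+(p-1)=p^{2n}+p-1$, matching exactly the character-degree profile of $p^{1+2n}_\pm$.

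Finally I would apply \cite[Proposition 7.7]{MR1645304}: a finite group whose irreducible character degrees agree with those of an extraspecial $p$-group of order $p^{2n+1}$ is itself extraspecial. Since steps one through three verify this hypothesis for $G$, the conclusion follows. The only mild subtlety is confirming that the two a priori different senses of "character ring" (the fusion ring of $\mathrm{Rep}(G)$ versus the abstract $\mathbb{Z}$-ring spanned by irreducible characters) carry the same FPdim data, but this is immediate from the eigenvalue argument above; the deeper classification work is entirely packaged into the cited proposition.
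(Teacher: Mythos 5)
Your proof is correct and is precisely the expansion of the paper's own (implicit) argument: the paper states the lemma as a ``trivial corollary'' of the fact that extraspecial $p$-groups are characterized by their character degrees \cite[Proposition 7.7]{MR1645304}, and you supply exactly the missing steps — that the degree multiset is read off the fusion ring via Frobenius–Perron eigenvalues, and that the resulting profile $(p^{2n}$ of degree $1$, $p-1$ of degree $p^n)$ forces $|G|=p^{2n+1}$ and then extraspeciality via the cited characterization.
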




\subsection{Braided Tambara-Yamagami}\label{TY}

\par Tambara-Yamagami fusion categories are $\mathbbm{Z}/2\mathbb{Z}$-graded extensions (see Section \ref{modular}) of pointed fusion categories whose non-trivial graded component has exactly one isomorphism class of simple objects.  We will denote the isomorphism classes of invertible objects by $g\in G$, a finite group, and the isomorphism class of the noninvertible simple object by $x$.  The fusion rules for invertible elements follow the group operation of $G$ while fusion rules with $x$ must be $g\otimes x\cong x\otimes g\cong x$ and $x\otimes x\cong\bigoplus_{g\in G}g$.  For example, $\mathrm{Rep}(2_\pm^{1+2n})$ for $n\in\mathbb{Z}_{\geq1}$ are Tambara-Yamagami fusion categories whose adjoint subcategories have the fusion rules of an elementary abelian $2$-group.  It was shown in \cite{MR1659954} that a Tambara-Yamagami fusion category (over $\mathbb{C}$) is characterized by the group of invertible objects, $G$, which must be abelian, a nondegenerate bicharacter $\chi:G\times G\to\mathbb{C}^\times$ such that $\chi(g,h)=\chi(h,g)$ for all $g,h\in G$, and an extension sign \cite[Example 9.4]{MR2677836} which is usually recorded as $\tau$, a square root of $|G|^{-1}$ as this value appears in many computations.  It is reasonable to use the notation $\mathcal{C}(\chi,\tau)$ for such a category since a description of $\chi$ includes a description of the finite abelian group $G$.  This is the original notation of D.\ Tambara and S.\ Yamagami \cite[Definition 3.1]{MR1659954}.

\par It has been proven \cite[Theorem 1.2(1)]{siehler} that Tambara-Yamagami fusion categories possess a braiding if and only if $G$ is an elementary abelian 2-group, i.e.\ $G=E_n:=(\mathbb{Z}/2\mathbb{Z})^{\oplus n}$ for some $n\in\mathbb{Z}_{\geq1}$.   Up to isometry, there is a unique nondegenerate bicharacter $\chi^1$ on $E_n$ such that $\chi^1(g,h)=\chi^1(h,g)$ for all $g\in E_n$ when $n$ is odd and exactly two, $\chi^0,\chi^1$, up to isomorphism when $n$ is even \cite[Section 5]{MR156890}.  Specifically, $\chi^0(g,g)=1$ and $\chi^1(g,g)=-1$ for all generators $g\in E_n$.  When no specific braiding needs to be defined, it will suffice to denote all braided Tambara-Yamagami fusion categories as $\mathcal{C}(\chi^k_n,\tau)$ where $k=1,2$ and $n$ indicates $E_n$ is the underlying group of invertible objects.

\par For fixed $n\in\mathbb{Z}_{\geq1}$, it was further proven that there exist at most $2^{n+1}$ inequivalent braidings for each $\mathcal{C}(\chi_n^k,\tau)$, indexed by choices of signs $(\delta_1,\ldots,\delta_n,\epsilon)$ \cite[Theorem 1.2(2)]{siehler}.  Many of these braidings are equivalent.  A more precise statement can be found in \cite[Corollary 4.10]{galindo2020trivializing} and elsewhere.  Here the data $(\delta_1,\ldots,\delta_n,\epsilon)$ is replaced with $(q,\alpha)$ where $q:E_n\to\mathbb{C}^\times$ is a quadratic form such that $\chi_n^k(g,h)=q(g)q(h)q(g+h)^{-1}$ for all $g,h\in E_n$, and $\alpha\in\mathbb{C}$ is a chosen square root of $\tau\sum_{g\in E_n}q(g)$.  To translate between the two sets of data, $q(g)=\sigma_1(g)$ for all $g\in E_n$ in the notation of \cite[Section 2.3]{siehler}, while $\alpha=\sigma_3(e)$.  Two braidings $(q,\alpha)$, $(q',\alpha')$ on a fixed fusion category $\mathcal{C}(\chi,\tau)$ with $\chi:E_n\times E_n\to\mathbb{C}^\times$ are equivalent if and only if there exists a group automorphism $f\in\mathrm{Aut}(E_n)$ such that $q'(f(g))=q(g)$ for all $g\in E_n$ and $\alpha=\alpha'$ \cite[Corollary 4.10]{galindo2020trivializing}.  This implies that $f$ satisfies $\chi(g,h)=\chi(f(g),f(h))$ for all $g,h\in E_n$.

\begin{example}\label{ice}
The braided equivalence classes of $\mathcal{C}(\chi_1^1,\tau)$ are given by the following sets of data in Figure \ref{fig:ising}, displayed in both the notation of \cite[Section 2.3]{siehler} and \cite[Section 4.3]{galindo2020trivializing}, where we have indexed the categories by the primitive 16th root of unity $\alpha$.  We use the label $\mathcal{I}$ as they have traditionally been referred to as Ising categories \cite[Appendix B]{DGNO}.
\begin{figure}[H]
\centering
\begin{equation*}
\begin{array}{|c|c|cc|ccc|}
\hline  & \tau & \delta &\epsilon & q(e) & q(g) & \alpha\\\hline
\mathcal{I}_1& 1/\sqrt{2} & 1 & 1 & 1 & i & \zeta_{16} \\
\mathcal{I}_3& -1/\sqrt{2} & -1 & 1 & 1 & -i & \zeta_{16}^3 \\
\mathcal{I}_5& -1/\sqrt{2} & 1 & -1 & 1 & i & \zeta_{16}^5\\
\mathcal{I}_7& 1/\sqrt{2} & -1 & -1 & 1 & -i & \zeta_{16}^7\\
\mathcal{I}_9& 1/\sqrt{2} & 1 & -1 & 1 & i & \zeta_{16}^9 \\
\mathcal{I}_{11}& -1/\sqrt{2} & -1 & -1 & 1 & -i & \zeta_{16}^{11}\\
\mathcal{I}_{13}& -1/\sqrt{2} & 1 & 1 & 1 & i & \zeta_{16}^{13}\\
\mathcal{I}_{15}& 1/\sqrt{2} & -1 & 1 & 1 & -i & \zeta_{16}^{15} \\
\hline
\end{array}
\end{equation*}
    \caption{$\chi_1^1$ braidings}%
    \label{fig:ising}%
\end{figure}
\end{example}

\begin{example}\label{kequals2}
Order the elements of $E_2=(\mathbb{Z}/2\mathbb{Z})^{\oplus2}$ as $e,g_1,g_2,g_1+g_2$.  For reference, the nondegenerate bilinear forms $\chi_2^k:E_2\times E_2\to\mathbb{C}^\times$ are given by 
\begin{equation}
\chi_2^0:\,\,\,\left[\begin{array}{cccc}
 1& 1 & 1 & 1 \\
 1& 1 & -1 & -1 \\
 1& -1 & 1 & -1 \\
 1& -1 & -1 & 1.
\end{array}\right],\qquad\text{ and }\qquad
\chi_2^1:\,\,\,\left[\begin{array}{cccc}
 1& 1 & 1 & 1 \\
 1& -1 & 1 & -1 \\
 1& 1 & -1 & -1 \\
 1& -1 & -1 & 1
\end{array}\right].
\end{equation}
There exist at most 8 braidings for each category which are defined by three sign choices in \cite{siehler}: $\delta_1$, $\delta_2$, and $\epsilon$.  The choice of $\epsilon$ will always result in 2 inequivalent braidings by \cite[Corollary 4.10(c)]{galindo2020trivializing} when other parameters are held constant, so it remains to determine whether any assignments of $\delta_1,\delta_2$ are equivalent.  Commutation of group elements $g_1,g_2$ with the noninvertible simple $x$ acts as multiplication by $q(g_j)=\sigma_1(g_j)=\delta_j\sqrt{\chi(g_j,g_j)}$ for $j=1,2$, and as multiplication by
\begin{equation}
q(g_1+g_2)=\sigma_1(g_1+g_2)=\delta_1\delta_2\sqrt{\chi(g_1,g_1)}\sqrt{\chi(g_2,g_2)}\chi(g_1,g_2)=-\delta_0\delta_1
\end{equation}
by the commutation of $g_1+g_2$ with $x$.  Lastly, since $x\otimes x=e\oplus g_1\oplus g_2\oplus(g_1+g_2)$, we can describe the braiding of $x$ with itself on the component corresponding to $e\in E_2$ as multiplication by
\begin{equation}\label{sigma3}
\alpha=\sigma_3(e)=\epsilon\sqrt{\tau\left(1+\delta_1\sqrt{\chi(g_1,g_1)}+\delta_2\sqrt{\chi(g_2,g_2)}-\delta_1\delta_2\right)}.
\end{equation}
We have $\mathrm{Aut}(E_2)\cong S_3$, but for $\chi_2^1$, only the permutation $g_1\leftrightarrow g_2$ preserves $q$ as $q(g_1+g_2)\in\mathbb{R}$ while $q(g_1),q(g_2)\not\in\mathbb{R}$, leaving 6 inequivalent braidings on each $\mathcal{C}(\chi_2^1,\tau)$.  For $\chi_2^0$, $q(g)=\pm1$ for all $g\in E_2$ and any two braidings are equivalent if the images of their quadratic forms $q$ have the same number of 1's and $-1$'s, and $\alpha=\sigma_3(e)$ corresponding to each is equal.  Therefore, for a fixed fusion category $\mathcal{C}(\chi_2^0,\tau)$ there are $4$ inequivalent braidings for a total of $16$ braided equivalence classes of $E_2$ Tambara-Yamagami braided fusion categories.

\par One can verify with the above formulas that the collections of braiding data in Figure \ref{fig:one}, displayed in both the notation of \cite[Section 2.3]{siehler} and \cite[Section 4.3]{galindo2020trivializing}, correspond to the 4 symmetrically braided $E_2$ Tambara-Yamagami fusion categories, where $z\in Z(G)$ for $G=D_4,Q_8$ is the unique nontrivial central element.  The observation that $\mathcal{C}(\chi_2^0,1/2)$ is equivalent to $\mathrm{Rep}(D_4)$ and $\mathcal{C}(\chi_2^0,-1/2)$ is equivalent to $\mathrm{Rep}(Q_8)$ as fusion categories was made in \cite[Section 4]{MR1659954}.  The remaining 4 equivalence classes of braided fusion categories $\mathcal{C}(\chi_2^0,\tau)$ in Figure \ref{fig:onetwo}, and 12 equivalence classes of braided fusion categories $\mathcal{C}(\chi_2^1,\tau)$ in Figure \ref{fig:onetwothree}, are not symmetrically braided.  We label these braided fusion categories by their realizations which are discussed in Section \ref{interesting}.

\begin{figure}[H]
\centering
\begin{equation*}
\begin{array}{|c|c|ccc|ccccc|}
\hline  & \tau & \delta_1 & \delta_2 &\epsilon & q(e) & q(g_1) & q(g_2) & q(g_1+g_2) & \alpha\\\hline
\mathrm{Rep}(D_4,e) & 1/2 & 1 & 1 & 1 & 1 &  1 & 1 & -1 & 1 \\
\mathrm{Rep}(D_4,z) & 1/2 & 1 & 1 & -1 & 1 & 1 & 1 & -1 & -1  \\
\mathrm{Rep}(Q_8,e) & -1/2 & -1 & -1 & 1 & 1 & -1 & -1 & -1 & 1  \\
\mathrm{Rep}(Q_8,z) & -1/2 & -1 & -1 & -1 & 1 & -1 & -1 & -1 & -1  \\\hline
\end{array}
\end{equation*}
    \caption{Symmetric $\chi_2^0$ braidings}%
    \label{fig:one}%
\end{figure}

\begin{figure}[H]
\centering
\begin{equation*}
\begin{array}{|c|c|ccc|ccccc|}
\hline  & \tau & \delta_1 & \delta_2 &\epsilon & q(e) & q(g_1) & q(g_2) & q(g_1+g_2) & \alpha\\\hline
\mathcal{K} & 1/2 & -1 & -1 & 1 & 1 & -1 & -1 & -1 & i \\
\mathcal{K}^\mathrm{rev} & 1/2 & -1 & -1 & -1 & 1 & -1 & -1 & -1 & -i  \\
\mathcal{Z}(\mathrm{Vec}^\gamma_{Q_8})_\mathrm{ad} & -1/2 & 1 & 1 & 1 & 1 & 1 & 1 & -1 & i  \\
\mathcal{Z}(\mathrm{Vec}^\gamma_{Q_8})^\mathrm{rev}_\mathrm{ad} & -1/2 & 1 & 1 & -1 & 1 & 1 & 1 & -1 & -i  \\
\hline
\end{array}
\end{equation*}
    \caption{Nonsymmetric $\chi_2^0$ braidings}%
    \label{fig:onetwo}%
\end{figure}

\begin{figure}[H]
\centering
\begin{equation*}
\begin{array}{|c|c|ccc|ccccc|}
\hline  & \tau & \delta_1 & \delta_2 &\epsilon & q(e) & q(g_1) & q(g_2) & q(g_1+g_2) & \alpha\\\hline
(\mathcal{I}_1\boxtimes\mathcal{I}_1)_\mathbb{Q} & 1/2 & 1 & 1 & 1 & 1 & i & i & -1 & \zeta_8 \\
(\mathcal{I}_5\boxtimes\mathcal{I}_5)_\mathbb{Q} & 1/2 & 1 & 1 & -1 & 1 & i & i & -1 & \zeta^5_8  \\
(\mathcal{I}_{\zeta_1}\boxtimes\mathcal{I}_{15})_\mathbb{Q} & 1/2 & 1 & -1 & 1 & 1 & i & -i & 1 & 1  \\
(\mathcal{I}_1\boxtimes\mathcal{I}_7)_\mathbb{Q} & 1/2 & 1 & -1 & -1 & 1 & i & -i & 1 &  -1 \\
(\mathcal{I}_7\boxtimes\mathcal{I}_7)_\mathbb{Q}  & 1/2 & -1 & -1 & 1 & 1 & -i & -i & -1 & \zeta_8^7 \\
(\mathcal{I}_3\boxtimes\mathcal{I}_3)_\mathbb{Q} & 1/2 & -1 & -1 & -1 & 1 & -i & -i & -1 & \zeta_8^3  \\
(\mathcal{I}_1\boxtimes\mathcal{I}_{13})_\mathbb{Q} & -1/2 & 1 & 1 & 1 & 1 & i & i & -1 & \zeta_8^7 \\
(\mathcal{I}_1\boxtimes\mathcal{I}_5)_\mathbb{Q} & -1/2 & 1 & 1 & -1 & 1 & i & i & -1 &  \zeta_8^3 \\
(\mathcal{I}_1\boxtimes\mathcal{I}_3)_\mathbb{Q}& -1/2 & 1 & -1 & 1 & 1 & i & -i & 1 & i \\
(\mathcal{I}_1\boxtimes\mathcal{I}_{11})_\mathbb{Q} & -1/2 & 1 & -1 & -1 & 1 & i & -i & 1 & -i \\
(\mathcal{I}_3\boxtimes\mathcal{I}_{15})_\mathbb{Q} & -1/2 & -1 & -1 & 1 & 1 & -i & -i & -1 & \zeta_8 \\
(\mathcal{I}_3\boxtimes\mathcal{I}_7)_\mathbb{Q} & -1/2 & -1 & -1 & -1 & 1 & -i & -i & -1 & \zeta_8^5 \\
\hline
\end{array}
\end{equation*}
    \caption{$\chi_2^1$ braidings}%
    \label{fig:onetwothree}%
\end{figure}

\end{example}

\subsection{Doubles and algebras}\label{doubles}

Lastly, we outline the center, or double construction, for fusion categories and its relation to commutative algebras.  If $\mathcal{C}$ is a fusion category, then $\mathcal{Z}(\mathcal{C})$, the center or double of $\mathcal{C}$ has objects in correspondence with pairs $(x,\{\rho_y\}_{y\in\mathcal{C}})$ where $x\in\mathcal{C}$ and $\rho_y:x\otimes y\to y\otimes x$ is a natural isomorphism for all $y\in\mathcal{C}$ satisfying the coherence diagram in \cite[Definition 7.13.1]{tcat} where one can find the definition of morphisms in this category, its monoidal product, etc.  The goal of this construction is to create a nondegenerately braided fusion category from $\mathcal{C}$ in a uniform manner, which often ends up being quite unwieldly.  The namesake of the double comes from the fact that the representation category of the quantum double of a Hopf algebra $H$ is precisely the double $\mathcal{Z}(\mathrm{Rep}(H))$ \cite[Proposition 7.14.6]{tcat}.  Let $G$ be a finite group and $\omega\in H^3(G,\mathbb{C}^\times)$ a cohomological twisting of the associativity of $\mathrm{Vec}_G$, producing the fusion category of twisted $G$-graded vector spaces $\mathrm{Vec}_G^\omega$ \cite[Example 2.3.8]{tcat}.  The center $\mathcal{Z}(\mathrm{Vec}_G^\omega)$ is known as the twisted double of $G$ whose modular data has been known for quite some time \cite[Sections 2.2 \& 5.2]{gannon1996}.

\par Alternatively, twisted doubles of finite groups can be characterized by possessing maximal Tannakian fusion subcategories which we will understand through the following construction.  In general, if $\mathcal{C}$ is a braided fusion category and $\mathrm{Rep}(G)\subset\mathcal{C}$ is a Tannakian fusion subcategory for a finite group $G$, then $R:=\oplus_{g\in G}g$ has the structure of a connected \'etale algebra in $\mathcal{C}$ \cite[Section 3]{DMNO}; we will refer to this algebra as the regular algebra of the Tannakian subcategory.  The category of local $R$-modules $\mathcal{C}_R^0$ is then a braided fusion category which is nondegenerately braided if and only if $\mathcal{C}$ is \cite[Corollary 3.30]{DMNO}.  Moreover $\mathcal{C}_R^0$ inherits the spherical structure of $\mathcal{C}$ so the passage $\mathcal{C}\to\mathcal{C}_R^0$ sends modular tensor categories to modular tensor categories.  When $R$ is the regular algebra of a Tannakian fusion subcategory $\mathcal{E}\subset\mathcal{C}$, then the simple objects of $\mathcal{C}_R^0$ are summands of the free $R$-modules $R\otimes x$ for $x\in\mathcal{O}(C_\mathcal{C}(\mathcal{E}))$ \cite[Example 3.14]{DMNO}. 

\par We will use regular algebras of Tannakian subcategories for numerical arguments as well, but one important application is to prove a reconstruction theorem for twisted doubles of finite groups from Lagrangian subcategories, i.e.\ Tannakian fusion subcategories $\mathcal{E}$ of braided fusion categories $\mathcal{C}$ such that $\mathrm{FPdim}(\mathcal{E})^2=\mathrm{FPdim}(\mathcal{C})$.  Theorem 4.5 of \cite{drinfeld2007grouptheoretical} states that a modular tensor category $\mathcal{C}$ is braided equivalent to a twisted double of a finite group $G$ if and only if there exists a Lagrangian subcategory $\mathrm{Rep}(G)\subset\mathcal{C}$.


\section{Nondegenerate covers and extensions of $\mathcal{C}(\chi_2^k,\tau)$}\label{interesting}

Here we describe minimal nondegenerate covers (Section \ref{covers}) for nonsymmetrically braided $E_2$ Tambara-Yamagami fusion categories when they exist, while $\mathcal{Z}(\mathcal{C})$ is known to be a minimal nondegenerate cover for all symmetrically braided fusion categories $\mathcal{C}$.  The main objective of this section is to provide an independent proof that there exists a braided fusion category of Frobenius-Perron dimension 8 whose symmetric center has Frobenius-Perron dimension 4, which does not possess a nondegenerate cover of Frobenius-Perron dimension of 32 \cite[Proposition 4.11]{galindo2017categorical}.  In fact, there are 2 such braided fusion categories, equivalent as fusion categories, with reverse braidings, denoted $\mathcal{K}$ and $\mathcal{K}^\mathrm{rev}$ in Figure \ref{fig:onetwo}.  We do so in a way that generalizes to an infinite family of examples which have not appeared in the literature thus far in Section \ref{jen}, and also illustrates the structure between some minimal nondegenerate covers (Example \ref{structure}).


\subsection{Nonsymmetric $\chi_2^0$ braidings}\label{firstexample}

\par Let $\mathcal{C}$ be one of the 4 nonsymmetrically braided fusion categories $\mathcal{C}(\chi^0_2,\tau)$ (Figure \ref{fig:onetwo}).  Note that the symmetric center $C_\mathcal{C}(\mathcal{C})=\mathcal{C}_\mathrm{pt}$ is Tannakian, using the formulas of \cite[Section 2.3(2)]{siehler}.  Let $\mathcal{D}$ be a nondegenerate cover of $\mathcal{C}$ with $\mathrm{FPdim}(\mathcal{D})=32$.  As $\mathcal{D}$ is weakly integral, we may assume $\mathcal{D}$ is a modular tensor category equipped with its unique positive spherical structure. We will refer to $\mathcal{O}(\mathcal{C}_\mathrm{pt})$ by its structure as an abelian group, $E_2=(\mathbb{Z}/2\mathbb{Z})^2$.

\par We will first demonstrate that $\mathcal{D}_\mathrm{pt}=\mathcal{C}_\mathrm{pt}$ and $\mathcal{D}_\mathrm{ad}=\mathcal{C}$.  To this end, since $\mathcal{D}$ is weakly integral, then \cite[Lemma 1.1]{2019arXiv191212260G} implies $\mathcal{D}_\mathrm{ad}$ is integral, of dimension $4$ or $8$ since $\dim(\mathcal{D})=\dim(\mathcal{D}_\mathrm{pt})\dim(\mathcal{D}_\mathrm{ad})$ by \cite[Corollaries 8.21.7 \& 8.22.8]{tcat}.  Recall that $\mathcal{D}$ is nilpotent, as its dimension is a prime power \cite[Example 4.5]{nilgelaki}.  Hence in the former case, every $y\in\mathcal{O}(\mathcal{D})$ satisfies $\dim(y)^2\in\{1,2,4\}$ as $\dim(y)^2$ must divide $\dim(\mathcal{D}_\mathrm{ad})$ \cite[Theorem 5.2.]{nilgelaki}.  If there exists simple $y\in\mathcal{D}$ with $\dim(y)^2=2$ then there exists a unique simple object $g\in E_2$ with $g\otimes y\cong y$: the nontrivial summand of $y\otimes y^\ast$.   The other two nontrivial $h_1,h_2\in E_2$ must then satisfy $h_j \otimes y\cong z$ where $z$ is another simple object in the same universally graded component as $y$ with $\dim(z)^2=2$.  Morever $y\otimes z\cong h_1\oplus h_2\in\mathcal{D}_\mathrm{ad}$.  Therefore $y\otimes y\cong\mathbbm{1}\oplus g$ (and $z\otimes z\cong\mathbbm{1}\oplus g$), implying $y\cong y^\ast$.  This forces $y$ to $\otimes$-generate a braided fusion category of dimension 4 which is not pointed.  Such a category is an Ising modular tensor category, whose pointed subcategory is not Tannakian \cite[Lemma B.18]{DGNO}.  But $\mathcal{C}_\mathrm{pt}$ is Tannakian so this is cannot be the case.  Hence every simple object is invertible or dimension 2 and thus lies in its own universally graded component.  This implies $g\otimes x\cong x$ for any $x$ such that $\dim(x)=2$, hence $S_{g,x}=2$ by the balancing equation and thus $x$ is centralized by $\mathcal{D}_\mathrm{ad}$.  This is a contradiction since $C_\mathcal{D}(\mathcal{D}_\mathrm{ad})=\mathcal{D}_\mathrm{pt}$.  We may then conclude that $\dim(\mathcal{D}_\mathrm{ad})=8$, hence $\mathcal{D}_\mathrm{pt}=\mathcal{C}_\mathrm{pt}$, and lastly $\mathcal{D}_\mathrm{ad}=\mathcal{C}$.

\par We may now reveal the $S$-matrix of $\mathcal{D}$ (Section \ref{modular}).  The balancing equation implies $S_{x,x}=S_{x,x^\ast}=4\theta_x^{-2}$ since $\theta_g=1$ for all $g\in E_2$, while $S_{x,g}=2$ for all $g\in E_2$ since $x$ centralizes $g$.  From the formulas of \cite[Section 3.7]{siehler} and \cite[Section 2.3(2)]{siehler}, $\theta_x=\pm i$ since $\mathcal{C}$ is not symmetric.  Moreover $S_{x,x}=-4$.  The orthogonality relation (Section \ref{modular}) of $x$ with itself is
\begin{equation}
32=\sum_{z\in\mathcal{O}(\mathcal{D})}|S_{x,z}|^2=\sum_{z\in\mathcal{O}(\mathcal{C})}|S_{x,z}|^2+\sum_{z\in\mathcal{O}(\mathcal{D})\setminus\mathcal{O}(\mathcal{C})}|S_{x,z}|^2=4\cdot 2^2+4^2+\sum_{z\in\mathcal{O}(\mathcal{D})\setminus\mathcal{O}(\mathcal{C})}|S_{x,z}|^2.
\end{equation}
Therefore $\sum_{z\in\mathcal{O}(\mathcal{D})\setminus\mathcal{O}(\mathcal{C})}|S_{x,z}|^2=0$ and moreover $S_{x,z}=0$ for all $z\in\mathcal{O}(\mathcal{D})\setminus\mathcal{O}(\mathcal{C})$.  Verlinde formula then implies for $y\in\mathcal{O}(\mathcal{D})\setminus\mathcal{O}(\mathcal{C})$,
\begin{equation}
\dim(\mathcal{D})N_{x,y}^y=\sum_{z\in E_2}\dfrac{S_{x,z}S_{y,z}\overline{S_{y,z}}}{\dim(z)}=\sum_{z\in E_2}S_{x,z}|S_{y,z}|^2,
\end{equation}
since for every $z\in\mathcal{O}(\mathcal{D})\setminus E_2$, we have shown either $S_{x,z}=0$ or $S_{y,z}=0$.  Moreover, we have computed the remaning $S$-matrix entries above, implying
\begin{equation}
N^x_{y,y^\ast}=N_{x,y}^y=\dfrac{1}{32}\sum_{z\in E_2}S_{x,z}|S_{y,z}|^2
=\dfrac{1}{32}\sum_{z\in E_2}2\cdot\dim(y)^2=\dfrac{\dim(y)^2}{4}.
\end{equation}
As this and $\dim(y)$ must be integers, and $\dim(y)^2$ divides $\dim(\mathcal{D}_\mathrm{ad})=8$, then $\dim(y)^2=4$ or $\dim(y)^2=8$.  But if $\dim(y)^2=8$ then $g\otimes y\cong y$ for all $g\in E_2$ since $y$ is the unique simple object in its universally graded component, hence $y$ is centralized by $E_2$, i.e.\ $y\in\mathcal{C}_\mathrm{ad}$, a contradiction.  Therefore $\dim(y)=2$ and thus each nontrivially graded component contains exactly 2 simple objects of dimension 2, which are permuted transitively by $E_2$.  Moreover $\mathrm{rank}(\mathcal{D})=11$.

\par To compute the remaining $S$-matrix entries, let $y\in\mathcal{O}(\mathcal{D})\setminus\mathcal{O}(\mathcal{C})$ with $y'$ the only other simple object in the orbit of $y$ under the $\otimes$-action of $E_2$.  The orthogonality relation of $y$ with $x$ is
\begin{equation}
0=\sum_{z\in\mathcal{O}(\mathcal{D})}S_{y,z}S_{x,z}=\sum_{z\in E_2}S_{y,z}S_{x,z}=\sum_{z\in E_2}\dfrac{\theta_{y\otimes z}}{\theta_y}\dim(y)\dim(x)=\dfrac{8}{\theta_y}(\theta_y+\theta_{y'}).
\end{equation}
Therefore $\theta_y=-\theta_{y'}$.  In particular, $y\cong y^\ast$ for all $y\in\mathcal{O}(\mathcal{D})\setminus\mathcal{O}(\mathcal{C})$ as duality is a permutation of each universally graded component since $E_2$ has exponent 2, along with the fact that $\theta_{y^\ast}=\theta_y$ in complete generality.  The fact that $\theta_y=-\theta_{y'}$ also completes the $S$-matrix columns for $g\in E_2$ since $S_{g,y}=S_{g,y}=2$ if $g\otimes y=y$ and $S_{g,y}'=-2$ if $g\otimes y\cong y'$.  The balancing equation then implies
\begin{equation}
S_{y,y}=\theta_y^{-2}\left(\sum_{z\in E_2}N_{y,y}^z\theta_z+2\theta_x+\sum_{z\in\mathcal{O}(\mathcal{D})\setminus\mathcal{O}(\mathcal{C})}2N_{y,y}^z\theta_z\right)=2\theta_y^{-2}(1+\theta_x).
\end{equation}
It was already determined that $\theta_x=\pm i$, so $(1/\sqrt{2})(1+\theta_x)=\zeta_8^{-i\theta_x}$ where $\zeta_8:=\exp(2\pi i/8)$.  Thus $S_{y,y}=2\sqrt{2}\cdot\theta_y^{-2}\zeta_8^{-i\theta_x}$.  But $y\cong y^\ast$, hence $S_{y,y}\in\mathbb{R}$ and therefore $\theta_y$ is a primitive 16th root of unity for all $y\in\mathcal{O}(\mathcal{D})\setminus\mathcal{O}(\mathcal{C})$.  Moreover, we may define the signs $\varepsilon_y:=\theta_y^{-2}\zeta_8^{-i\theta_x}$ so that $S_{y,y}=\varepsilon_y2\sqrt{2}$.  Furthermore, if $y\not\cong y'$ are in the same nontrivial graded component with $g\otimes y\cong y'$ for some $g\in G$, then by \cite[Proposition 8.13.10]{tcat},
\begin{equation}
S_{y,y'}=S_{y,g\otimes y}=\dfrac{1}{2}S_{y,g}S_{y,y}=\dfrac{\theta_{g\otimes y}}{\theta_y}S_{y,y}=-S_{y,y}.
\end{equation}
Orthogonality of $y$ with itself is then
\begin{equation}
32=\sum_{z\in\mathcal{O}(\mathcal{D})}|S_{y,z}|^2=4\cdot2^2+2\cdot(2\sqrt{2})^2+\sum_{z\in\mathcal{O}(\mathcal{D})\setminus(\mathcal{O}(\mathcal{C})\cup\{y,y'\})}|S_{y,z}|^2.
\end{equation}
Therefore $S_{y,z}=0$ for all $z\not\cong y,y'\in\mathcal{O}(\mathcal{D})\setminus\mathcal{O}(\mathcal{C})$.  Moreover, up to permutation of simple objects, there exist signs $\varepsilon_1,\varepsilon_2,\varepsilon_3$ (determined by the $T$-matrix) such that the $S$-matrix of $\mathcal{D}$ is
\begin{equation}\label{eqs}
\left[\begin{array}{ccccccccccc}
1 & 1 & 1 & 1 & 2 & 2 & 2 & 2 & 2 & 2 & 2 \\
1 & 1 & 1 & 1 & 2 & 2 & 2 & -2 & -2 & -2 & -2\\
1 & 1 & 1 & 1 & 2 & -2 & -2 & 2 & 2 & -2 &-2 \\
1 & 1 & 1 & 1 & 2 & -2 & -2 & -2 & -2 & 2 & 2\\
2 & 2 & 2 & 2 & -4 & 0 & 0 & 0 & 0 & 0 & 0 \\
2 & 2 & -2 & -2 & 0 & \varepsilon_12\sqrt{2} & -\varepsilon_12\sqrt{2} & 0 & 0 & 0 & 0\\
2 & 2 & -2 & -2 & 0 & -\varepsilon_12\sqrt{2} & \varepsilon_12\sqrt{2} & 0 & 0 & 0 & 0\\
2 & -2 & 2 & -2 & 0 & 0 & 0 & \varepsilon_22\sqrt{2} & -\varepsilon_22\sqrt{2} & 0 & 0\\
2 & -2 & 2 & -2 & 0 & 0 & 0 & -\varepsilon_22\sqrt{2} & \varepsilon_22\sqrt{2} & 0 & 0\\
2 & -2 & -2 & 2 & 0 & 0 & 0 & 0 & 0 & \varepsilon_32\sqrt{2} & -\varepsilon_32\sqrt{2}\\
2 & -2 & -2 & 2 & 0 & 0 & 0 & 0 & 0 & -\varepsilon_32\sqrt{2} & \varepsilon_32\sqrt{2}
\end{array}\right].
\end{equation}

\par Let $\mathcal{P}$ be the unique (up to modular equivalence) pointed modular tensor category of rank 2 whose nontrivial simple object $x'$ has full twist $\theta_{x'}=\theta_x^{-1}$.  Then $\mathcal{D}\boxtimes\mathcal{P}$ contains a Lagrangian subcategory $\otimes$-generated by $\mathcal{D}_\mathrm{pt}$ and the simple object $x\boxtimes x'$.  This implies $\mathcal{D}\boxtimes\mathcal{P}\simeq\mathcal{Z}(\mathrm{Vec}_G^\omega)$ for a finite group $G$ of order 8 and a $3$-cocycle $\omega$ on $G$ \cite[Theorem 4.5]{drinfeld2007grouptheoretical}.  There are 38 braided equivalence classes of $\mathcal{Z}(\mathrm{Vec}_G^\omega)$ of this form \cite[Section 2.8]{MR3820602}.  But moreover, $\mathcal{D}\boxtimes\mathcal{P}$ must have conductor (Frobenius-Schur exponent) 16 and 12 simple objects whose twists are 16th roots of unity.  There are only 4 braided equivalence classes of $\mathcal{Z}(\mathrm{Vec}_G^\omega)$ with these characteristics which can be indexed as $\mathcal{Z}(\mathrm{Vec}_{Q_8}^\gamma)$ where $\gamma$ is any of the 4 generators of $H^3(Q_8,\mathbb{C}^\times)\cong\mathbb{Z}/8\mathbb{Z}$ \cite[Appendix A]{MR2333187}.  One can easily verify that each of $\mathcal{Z}(\mathrm{Vec}_{Q_8}^\gamma)$ factors as a Deligne product in this way, proving the following result.

\begin{lemma}\label{q8}
Let $\mathcal{C}:=\mathcal{C}(\chi_2^0,\tau)$ be a nonsymmetrically braided fusion category.  There exists a nondegenerate cover $\mathcal{D}$ of $\mathcal{C}$ with $\mathrm{FPdim}(\mathcal{D})=32$ if and only if there exists a generator $\gamma\in H^3(Q_8,\mathbb{C}^\times)$ and a braided equivalence $\mathcal{C}\simeq\mathcal{Z}(\mathrm{Vec}_{Q_8}^\gamma)_\mathrm{ad}$.
\end{lemma}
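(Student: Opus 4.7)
The plan is to complete both directions of the biconditional, exploiting the detailed structural analysis carried out in the preceding paragraphs.

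For the forward direction, assume $\mathcal{D}$ is a nondegenerate cover of $\mathcal{C}$ with $\mathrm{FPdim}(\mathcal{D})=32$. The preceding paragraphs have already pinned down its modular data (\ref{eqs}), forced $\mathcal{D}_\mathrm{ad}=\mathcal{C}$, established $\theta_x=\pm i$, and shown that the six simples outside $\mathcal{C}$ carry primitive $16$th-root twists. I would form the Deligne product $\mathcal{D}\boxtimes\mathcal{P}$, where $\mathcal{P}$ is the rank-$2$ pointed modular category whose nontrivial simple $x'$ satisfies $\theta_{x'}=\theta_x^{-1}$, and verify that the fusion subcategory $\mathcal{L}\subset\mathcal{D}\boxtimes\mathcal{P}$ generated by $\mathcal{D}_\mathrm{pt}\boxtimes\mathbbm{1}$ together with $x\boxtimes x'$ is Lagrangian. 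Indeed, $\mathrm{FPdim}(\mathcal{L})=4+4=8=\sqrt{\mathrm{FPdim}(\mathcal{D}\boxtimes\mathcal{P})}$, the object $x\boxtimes x'$ has trivial twist, it self-centralizes via $S_{x,x}S_{x',x'}=(-4)(-1)=4$, and it centralizes each $g\boxtimes\mathbbm{1}$ via $S_{g,x}=2$. By \cite[Theorem 4.5]{drinfeld2007grouptheoretical}, $\mathcal{D}\boxtimes\mathcal{P}\simeq\mathcal{Z}(\mathrm{Vec}_G^\omega)$ for some group $G$ of order $8$ and $3$-cocycle $\omega$. Among the $38$ braided equivalence classes of such twisted doubles listed in \cite[Section 2.8]{MR3820602}, only the four $\mathcal{Z}(\mathrm{Vec}_{Q_8}^\gamma)$ indexed by generators $\gamma$ of $H^3(Q_8,\mathbb{C}^\times)\cong\mathbb{Z}/8\mathbb{Z}$ exhibit conductor $16$ together with the multiset of $16$th-root twists inherited from $\mathcal{D}\boxtimes\mathcal{P}$, as can be read off from \cite[Appendix A]{MR2333187}. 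Since $\mathcal{P}_\mathrm{ad}=\mathrm{Vec}$, passage to adjoints gives $\mathcal{C}\simeq\mathcal{D}_\mathrm{ad}\simeq(\mathcal{D}\boxtimes\mathcal{P})_\mathrm{ad}\simeq\mathcal{Z}(\mathrm{Vec}_{Q_8}^\gamma)_\mathrm{ad}$.

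For the converse, suppose $\mathcal{C}\simeq\mathcal{Z}(\mathrm{Vec}_{Q_8}^\gamma)_\mathrm{ad}$ for some generator $\gamma$. I would locate, for each such $\gamma$, a rank-$2$ pointed subcategory $\mathcal{P}\subset\mathcal{Z}(\mathrm{Vec}_{Q_8}^\gamma)$ that is itself nondegenerately braided; M\"uger's decomposition theorem then furnishes a Deligne factorization $\mathcal{Z}(\mathrm{Vec}_{Q_8}^\gamma)\simeq\mathcal{D}\boxtimes\mathcal{P}$, where $\mathcal{D}:=C_{\mathcal{Z}(\mathrm{Vec}_{Q_8}^\gamma)}(\mathcal{P})$ is a nondegenerately braided fusion category of Frobenius-Perron dimension $32$. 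Again using $\mathcal{P}_\mathrm{ad}=\mathrm{Vec}$, one has $\mathcal{D}_\mathrm{ad}\simeq\mathcal{Z}(\mathrm{Vec}_{Q_8}^\gamma)_\mathrm{ad}\simeq\mathcal{C}$, so $\mathcal{D}$ is the desired nondegenerate cover.

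The main obstacle is the case analysis in the forward direction: winnowing the $38$ twisted doubles of groups of order $8$ down to the four $\mathcal{Z}(\mathrm{Vec}_{Q_8}^\gamma)$ requires a careful cross-reference of conductor, the complete multiset of twists, and the fusion-rule shape against the explicit tables of \cite{MR2333187}. In the converse direction, exhibiting the splitting rank-$2$ pointed modular subcategory inside each $\mathcal{Z}(\mathrm{Vec}_{Q_8}^\gamma)$ requires isolating, in the modular data of the twisted double, an invertible simple of appropriate twist that lies outside the image of $\mathcal{C}$, which is a short but sign-sensitive verification.
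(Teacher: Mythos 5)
Your proposal is correct and follows the paper's own route essentially verbatim: form $\mathcal{D}\boxtimes\mathcal{P}$ with $\mathcal{P}$ chosen so that $x\boxtimes x'$ has trivial twist, identify the Lagrangian subcategory to land in a twisted double of an order-$8$ group via \cite[Theorem 4.5]{drinfeld2007grouptheoretical}, and use the conductor-$16$ twist data from \cite[Appendix A]{MR2333187} to isolate the four $\mathcal{Z}(\mathrm{Vec}_{Q_8}^\gamma)$; the converse via a M\"uger-type factorization of $\mathcal{Z}(\mathrm{Vec}_{Q_8}^\gamma)$ by a rank-$2$ pointed modular subcategory is likewise the paper's concluding observation. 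The only cosmetic addition on your part is spelling out the $S$-matrix computations verifying $\mathcal{L}$ is Lagrangian, which the paper leaves implicit.
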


\par We have shown that if $\mathcal{D}$ is a nondegenerate cover of dimension $32$ of nonsymmetrically braided $\mathcal{C}(\chi_2^0,\tau)$, then $\mathcal{D}$ is a factor of one of the 4 doubles $\mathcal{Z}(\mathrm{Vec}_{Q_8}^\gamma)$ for a generator $\gamma\in H^3(Q_8,\mathbb{C}^\times)$.  Each $\mathcal{Z}(\mathrm{Vec}_{Q_8}^\gamma)$ factors like this in exactly 4 ways, depending on the choice of the order 2 invertible object to include in the other factor, which is necessarily rank 2 and pointed.  Therefore, there are at most 16 distinct minimal nondegenerate covers $\mathcal{D}$ over all $\mathcal{C}(\chi_2^0,\tau)$; in fact, there are exactly 16 as they are differentiated by their modular data.  We include the nontrivial $T$-eigenvalues in Figure \ref{fig:too} which determine the $S$-matrix in (\ref{eqs}).  As a result, we have recovered the original counterexample of the minimal modular extension conjecture due to V.\ Drinfeld (along with its reverse braiding).

\begin{proposition}
Let $\mathcal{C}$ be either of the $E_2$ Tambara-Yamagami braided fusion categories labeled $\mathcal{K}$ and $\mathcal{K}^\mathrm{rev}$ from Figure \ref{fig:onetwo}.  There does not exist a nondegenerate cover $\mathcal{D}$ of $\mathcal{C}$ with $\mathrm{FPdim}(\mathcal{D})=32$.
\end{proposition}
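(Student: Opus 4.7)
The plan is to obtain this proposition as an immediate corollary of Lemma \ref{q8}, which already does the heavy lifting. That lemma asserts that a nonsymmetrically braided $\mathcal{C}(\chi_2^0,\tau)$ admits a nondegenerate cover of Frobenius-Perron dimension $32$ if and only if it is braided equivalent to $\mathcal{Z}(\mathrm{Vec}_{Q_8}^\gamma)_\mathrm{ad}$ for some generator $\gamma\in H^3(Q_8,\mathbb{C}^\times)$. So it suffices to rule out, for each choice of $\gamma$, any braided equivalence between $\mathcal{K}$ (or $\mathcal{K}^\mathrm{rev}$) and $\mathcal{Z}(\mathrm{Vec}_{Q_8}^\gamma)_\mathrm{ad}$.

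The approach is to separate the two candidate braided categories already at the level of their underlying fusion categories, using the Tambara-Yamagami parameter $\tau$ as the invariant. Every braided equivalence is in particular a fusion equivalence, and up to equivalence of fusion categories $\mathcal{C}(\chi_2^0,\tau)$ is determined by the sign of $\tau$ (the absolute value being forced by $|E_2|$). Inspecting Figure \ref{fig:onetwo}, both $\mathcal{K}$ and $\mathcal{K}^\mathrm{rev}$ carry $\tau=1/2$, and hence their underlying fusion category is the one identified in Section \ref{TY} as $\mathrm{Rep}(D_4)$; on the other hand, the entries $\mathcal{Z}(\mathrm{Vec}_{Q_8}^\gamma)_\mathrm{ad}$ and $\mathcal{Z}(\mathrm{Vec}_{Q_8}^\gamma)^\mathrm{rev}_\mathrm{ad}$ carry $\tau=-1/2$, whose underlying fusion category is $\mathrm{Rep}(Q_8)$. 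These two fusion categories are distinguished, for instance, by the Frobenius-Schur indicator of the two-dimensional simple object (which is the extension sign of the Tambara-Yamagami construction), so they are not fusion equivalent.

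Consequently neither $\mathcal{K}$ nor $\mathcal{K}^\mathrm{rev}$ is braided equivalent to any $\mathcal{Z}(\mathrm{Vec}_{Q_8}^\gamma)_\mathrm{ad}$, and Lemma \ref{q8} then immediately forbids the existence of a nondegenerate cover $\mathcal{D}$ of Frobenius-Perron dimension $32$. There is no substantive obstacle in this argument: the entire content of the proposition is packaged inside Lemma \ref{q8}, and the only genuine check is the elementary observation that the sign of $\tau$ separates the $\tau=1/2$ row from the $\tau=-1/2$ row of Figure \ref{fig:onetwo}.
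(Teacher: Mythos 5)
Your proof is correct, and it takes a genuinely different route from the paper's. The paper proves this proposition by combining Lemma~\ref{q8} with a counting argument: citing \cite[Theorem 1.1]{MR3613518} and \cite[Theorem 4.22]{MR3613518}, it observes that any nonsymmetric $\mathcal{C}(\chi_2^0,\tau)$ admits \emph{exactly} $|H^3(E_2,\mathbb{C}^\times)|=8$ minimal nondegenerate extensions up to equivalence whenever it admits any, then compares against the $16$ covers enumerated in Figure~\ref{fig:too} to conclude only two of the four nonsymmetric categories can carry covers, distinguished by $\theta_x$. Your argument instead eliminates $\mathcal{K}$ and $\mathcal{K}^{\mathrm{rev}}$ directly by separating the two candidate classes at the level of the underlying fusion category: the Tambara-Yamagami sign $\tau$ (equivalently the Frobenius--Schur indicator of the two-dimensional simple) distinguishes $\mathrm{Rep}(D_4)$ from $\mathrm{Rep}(Q_8)$, so no braided equivalence with $\mathcal{Z}(\mathrm{Vec}_{Q_8}^\gamma)_{\mathrm{ad}}$ is possible. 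This is cleaner and avoids the external classification theorems for minimal modular extensions entirely.

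One thing to flag, though it does not invalidate the argument: your proof leans on the claim that the entries of Figure~\ref{fig:onetwo} labelled $\mathcal{Z}(\mathrm{Vec}_{Q_8}^\gamma)_{\mathrm{ad}}$ sit in the $\tau=-1/2$ rows, i.e.\ that $\mathcal{Z}(\mathrm{Vec}_{Q_8}^\gamma)_{\mathrm{ad}}$ has underlying fusion category $\mathrm{Rep}(Q_8)$ rather than $\mathrm{Rep}(D_4)$. That labelling is not a definition; it is a computational identification the paper establishes in the paragraph before Lemma~\ref{q8} (``one can easily verify that each of $\mathcal{Z}(\mathrm{Vec}_{Q_8}^\gamma)$ factors as a Deligne product\dots''). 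If you want your proof to be self-contained you should note that this verification is what pins down $\tau=-1/2$; as written you are reading it off the figure, which obscures where the actual computational content lives. The paper's counting route has the mild advantage of never needing to resolve \emph{which} two of the four categories are realized as adjoint subcategories, only that at most two can be --- it trades that computation for the appeal to the extension-classification theorems.
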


\begin{proof}
This follows from Lemma \ref{q8} along with \cite[Theorem 4.22]{MR3613518} and \cite[Theorem 1.1]{MR3613518}.  The latter two imply that a nonsymmetrically braided $E_2$ Tambara-Yamagami fusion category $\mathcal{C}(\chi_2^0,\tau)$ has exactly $|H^3(E_2,\mathbb{C}^\times)|=8$ nondegenerate extensions of dimension 32, up to equivalence.  Lemma \ref{q8} then implies only two such $\mathcal{C}(\chi_2^0,\tau)$ can possess nondegenerate covers of this dimension, which are distinguished by the twist on the noninvertible simple object $x$.
\end{proof}

\begin{figure}[H]
\centering
\begin{equation*}
\begin{array}{|c|cc|cc|cc|}
\hline \theta_x & \theta_{y_1} & \theta_{y_1'} & \theta_{y_2} & \theta_{y_2'} & \theta_{y_3'} & \theta_{y_3'} \\\hline
 i & \zeta_{16}^5 & -\zeta_{16}^5 & \zeta_{16}^5 & -\zeta_{16}^5 & \zeta_{16}^5 & -\zeta_{16}^5  \\
    & \zeta_{16} & -\zeta_{16} & \zeta_{16}^5 & -\zeta_{16}^5 & \zeta_{16} & -\zeta_{16}  \\
      & \zeta_{16}^5 & -\zeta_{16}^5 & \zeta_{16} & -\zeta_{16} & \zeta_{16} & -\zeta_{16}  \\
 & \zeta_{16} & -\zeta_{16} & \zeta_{16} & -\zeta_{16} & \zeta_{16}^5 & -\zeta_{16}^5  \\
\hline
  i & \zeta_{16}^5 & -\zeta_{16}^5 & \zeta_{16}^5 & -\zeta_{16}^5 & \zeta_{16} & -\zeta_{16}  \\
    & \zeta_{16} & -\zeta_{16} & \zeta_{16}^5 & -\zeta_{16}^5 & \zeta_{16}^5 & -\zeta_{16}^5  \\
     & \zeta_{16}^5 & -\zeta_{16}^5 & \zeta_{16} & -\zeta_{16} & \zeta_{16}^5 & -\zeta_{16}^5  \\
 & \zeta_{16} & -\zeta_{16} & \zeta_{16} & -\zeta_{16} & \zeta_{16} & -\zeta_{16}  \\
\hline
  -i & \zeta_{16}^7 & -\zeta_{16}^7 & \zeta_{16}^7 & -\zeta_{16}^7 & \zeta_{16}^7 & -\zeta_{16}^7  \\
     & \zeta_{16}^3 & -\zeta_{16}^3 & \zeta_{16}^7 & -\zeta_{16}^7 & \zeta_{16}^3 & -\zeta_{16}^3  \\
      & \zeta_{16}^7 & -\zeta_{16}^7 & \zeta_{16}^3 & -\zeta_{16}^3 & \zeta_{16}^3 & -\zeta_{16}^3  \\
  & \zeta_{16}^3 & -\zeta_{16}^3 & \zeta_{16}^3 & -\zeta_{16}^3 & \zeta_{16}^7 & -\zeta_{16}^7  \\
\hline
   -i & \zeta_{16}^7 & -\zeta_{16}^7 & \zeta_{16}^7 & -\zeta_{16}^7 & \zeta_{16}^3 & -\zeta_{16}^3  \\
     & \zeta_{16}^3 & -\zeta_{16}^3 & \zeta_{16}^7 & -\zeta_{16}^7 & \zeta_{16}^7 & -\zeta_{16}^7  \\
       & \zeta_{16}^7 & -\zeta_{16}^7 & \zeta_{16}^3 & -\zeta_{16}^3 & \zeta_{16}^7 & -\zeta_{16}^7  \\
 & \zeta_{16}^3 & -\zeta_{16}^3 & \zeta_{16}^3 & -\zeta_{16}^3 & \zeta_{16}^3 & -\zeta_{16}^3  \\
 \hline
\end{array}
\end{equation*}
    \caption{$T$-matrices of minimal nondegenerate covers of $\mathcal{Z}(\mathrm{Vec}^\gamma_{Q_8})_\mathrm{ad}$ for generators $\gamma\in H^3(Q_8,\mathbb{C}^\times)$}%
    \label{fig:too}%
\end{figure}


\subsection{$\chi_2^1$ braidings}

Here we consider the 12 braided equivalence classes of $\mathcal{C}(\chi_2^1,\tau)$.  Recall that the pointed fusion subcategory corresponding to $E_2$ is symmetric in any case, and $q(g_j)^2=-1$ for $j=1,2$, hence $g_1,g_2$ do not lie in the symmetric center of $\mathcal{C}(\chi_2^1,\tau)$, while $g_1+g_2$ does as $q(g_1+g_2)^2=1$.  We have $\chi^1_2(g_1+g_2,g_1+g_2)=1$, hence the symmetric center of $\mathcal{C}(\chi_2^1,\tau)$ is braided equivalent to $\mathrm{Rep}(\mathbb{Z}/2\mathbb{Z},e)$ for any $\chi_2^1$ braiding.

\par Let $\mathcal{I},\mathcal{I}'$ be any of the 8 Ising braided fusion categories and denote their isomorphism classes of simple objects by $\{e,g,x\}$ and $\{e',g',x'\}$ where $x,x'$ are the unique noninvertible isomorphism classes.  These categories were described in Example \ref{ice}.  In particular each Ising braided fusion category is distinguished by a primitive 16th root of unity $\zeta$.  It is clear that $\mathcal{I}\boxtimes\mathcal{I}'$ is self-dual and $\mathcal{I}\boxtimes\mathcal{I}'$ contains a maximal integral subcategory $(\mathcal{I}\boxtimes\mathcal{I}')_\mathbb{Q}$ with four invertible simple objects $(\mathcal{I}\boxtimes\mathcal{I}')_\mathrm{pt}$ along with the simple object $x\boxtimes x'$ of Frobenius-Perron dimension $2$.  The subcategory $(\mathcal{I}\boxtimes\mathcal{I}')_\mathbb{Q}$ can also be identified as the relative centralizer of the Tannakian subcategory generated by $g\boxtimes g'$.  Therefore $(\mathcal{I}\boxtimes\mathcal{I}')_\mathbb{Q}$ is a braided fusion category of the form $\mathcal{C}(\chi_2^1,\tau)$.

\par Conversely, assume a braided fusion category $\mathcal{C}:=\mathcal{C}(\chi_2^1,\tau)$ is given with isomorphism classes of simple objects $\{e,g_1,g_2,g_1+g_2,x\}$.  Without loss of generality, consider $\mathcal{C}$ as a modular tensor category with its unique positive spherical structure.   Assume $\mathcal{D}$ is a nondegenerate cover of $\mathcal{C}$ with $\dim(\mathcal{D})=16$ so that $\mathcal{C}=C_\mathcal{D}(C_\mathcal{C}(\mathcal{C}))$ by Lemma \ref{previous}.  If $\dim(\mathcal{D}_\mathrm{pt})>\dim(\mathcal{C}_\mathrm{pt})=4$ then the adjoint subcategory of $\mathcal{D}$ would have dimension $2$ or $1$.  But $\mathcal{C}_\mathrm{pt}\subset\mathcal{D}_\mathrm{ad}$, so we may conclude that each universally graded component has dimension 4 with $\mathcal{C}_\mathrm{pt}=\mathcal{D}_\mathrm{ad}$ as the trivially graded component.  As $\mathcal{D}$ is nilpotent and weakly integral, $\dim(y)^2\in\{2,4\}$ for all noninvertible $y\in\mathcal{O}(\mathcal{D})$.  Any object of dimension 2 is unique in its universally graded component so it lies in $C_\mathcal{D}(C_\mathcal{C}(\mathcal{C}))=\mathcal{C}$ by the balancing equation \cite[Proposition 8.13.8]{tcat} since $\theta_{g_1+g_2}=1$.  Moreover $x\in\mathcal{O}(\mathcal{D})$ is the unique simple object of dimension 2 while the other 2 nontrivial components have 2 isomorphism classes of simple objects of dimension $\sqrt{2}$.  Let $y\not\cong y'$ be simple objects of dimension $\sqrt{2}$ in a nontrivially graded component of $\mathcal{D}$.  If $y^\ast\cong y'$, we still have $\theta_y=\theta_{y'}$.  Hence $S_{g_1+g_2,y}=S_{g_1+g_2,y'}=\sqrt{2}$ by the balancing equation which again implies $y,y'\in C_\mathcal{D}(C_\mathcal{C}(\mathcal{C}))=\mathcal{C}$, a contradiction.  Therefore we conclude that $\mathcal{D}$ is self-dual.  Moreover, any simple object of dimension $\sqrt{2}$ $\otimes$-generates an Ising braided fusion category which is necessarily nondegenerately braided.  Hence $\mathcal{D}$ factors as a product of Ising braided fusion categories \cite[Theorem 3.13]{DGNO}, finishing the proof of the following fact.

\begin{lemma}
Let $\mathcal{C}:=\mathcal{C}(\chi_2^1,\tau)$ be given.  There exists a nondegenerate cover $\mathcal{D}$ of $\mathcal{C}$ with $\mathrm{FPdim}(\mathcal{D})=16$ if and only if there exist Ising braided fusion categories $\mathcal{I},\mathcal{I}'$ and a braided equivalence $\mathcal{C}\simeq(\mathcal{I}\boxtimes\mathcal{I}')_\mathbb{Q}$.
\end{lemma}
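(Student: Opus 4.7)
This is a construction check. Given Ising braided fusion categories $\mathcal{I},\mathcal{I}'$, the Deligne product $\mathcal{I}\boxtimes\mathcal{I}'$ is nondegenerately braided of Frobenius-Perron dimension $16$. The paragraph preceding the lemma already shows $(\mathcal{I}\boxtimes\mathcal{I}')_\mathbb{Q}$ is always of the form $\mathcal{C}(\chi_2^1,\tau)$, with invertibles from $(\mathcal{I}\boxtimes\mathcal{I}')_\mathrm{pt}$ and unique noninvertible $x\boxtimes x'$. So a braided equivalence $\mathcal{C}\simeq(\mathcal{I}\boxtimes\mathcal{I}')_\mathbb{Q}$ exhibits $\mathcal{I}\boxtimes\mathcal{I}'$ as a nondegenerate cover of the required dimension.

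\textbf{Forward direction: setup.} Let $\mathcal{D}$ be a nondegenerate cover of $\mathcal{C}$ with $\dim(\mathcal{D})=16$. I would first combine Lemma \ref{previous} with the double-centralizer property to obtain $C_\mathcal{D}(C_\mathcal{C}(\mathcal{C}))=\mathcal{C}$, where $C_\mathcal{C}(\mathcal{C})$ is the subcategory generated by $g_1+g_2$. Nilpotency of $\mathcal{D}$ (prime power dimension) together with the containment $\mathcal{C}_\mathrm{pt}\subset\mathcal{D}_\mathrm{ad}$ and the identity $\dim(\mathcal{D})=|G|\dim(\mathcal{D}_\mathrm{ad})$, with $G$ the universal grading group, rule out $\mathcal{D}_\mathrm{pt}$ being strictly larger than $\mathcal{C}_\mathrm{pt}$; so $\mathcal{D}_\mathrm{pt}=\mathcal{C}_\mathrm{pt}$, each nontrivial component has dimension $4$, and weak integrality plus nilpotency forces each noninvertible simple to have dimension $\sqrt{2}$ or $2$.

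\textbf{Locating simples.} Since $\theta_{g_1+g_2}=1$, the balancing equation applied to any simple $y$ with $(g_1+g_2)\otimes y\cong y$ gives $S_{g_1+g_2,y}=\dim(y)\dim(g_1+g_2)$, so $y\in C_\mathcal{D}(C_\mathcal{C}(\mathcal{C}))=\mathcal{C}$. A dimension-$2$ simple is alone in its component by dimension count, hence $\otimes$-fixed by all of $\mathcal{C}_\mathrm{pt}$, hence in $\mathcal{C}$; thus the only dimension-$2$ simple is $x$, occupying one nontrivial component, and the two remaining components each contain two simples of dimension $\sqrt{2}$. A similar balancing-equation argument rules out $y^\ast\cong y'\not\cong y$ inside such a component, since $y$ and $y'$ would then share the same twist and both would be forced into $\mathcal{C}$, contradicting that $\mathcal{C}$ has a single noninvertible simple; so $\mathcal{D}$ is self-dual.

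\textbf{Factorization and main obstacle.} A dimension-$\sqrt{2}$ self-dual simple $y$ satisfies $y\otimes y\cong\mathbbm{1}\oplus g$ for some invertible $g$ of order $2$, so $y$ $\otimes$-generates a rank-$3$ fusion subcategory of Ising type, necessarily nondegenerately braided as a subcategory of $\mathcal{D}$ whose pointed part sits inside $\mathcal{D}_\mathrm{pt}$. Invoking \cite[Theorem 3.13]{DGNO} splits $\mathcal{D}$ as a Deligne product $\mathcal{I}\boxtimes\mathcal{I}'$ of two Ising factors, and matching maximal integral subcategories identifies $\mathcal{C}\simeq(\mathcal{I}\boxtimes\mathcal{I}')_\mathbb{Q}$. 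The most delicate step I anticipate is the self-duality argument: since $y$ and a putative $y'\cong y^\ast$ in the same component necessarily share twists, the balancing equation is not immediately decisive, and one must carefully combine centralizer information with the known structure of $\mathcal{C}$ to force $y\cong y^\ast$.
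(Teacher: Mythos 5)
Your proof is correct and mirrors the paper's argument step for step: establish $\mathcal{D}_\mathrm{pt}=\mathcal{C}_\mathrm{pt}$ with $\mathcal{D}_\mathrm{ad}=\mathcal{C}_\mathrm{pt}$, use the balancing equation with $\theta_{g_1+g_2}=1$ to force dimension-$2$ simples and any twist-matched pair into $C_\mathcal{D}(C_\mathcal{C}(\mathcal{C}))=\mathcal{C}$, deduce self-duality, and factor $\mathcal{D}$ as a product of two Ising categories via \cite[Theorem 3.13]{DGNO}. The "main obstacle" you flag at the end is, however, a non-issue, and your own "locating simples" paragraph already resolves it: if $y^\ast\cong y'$ then $\theta_y=\theta_{y^\ast}=\theta_{y'}$, so regardless of whether $(g_1+g_2)\otimes y$ is $y$ or $y'$, the balancing equation (with $\theta_{g_1+g_2}=1$) yields $S_{g_1+g_2,y}=\dim(g_1+g_2)\dim(y)$, placing $y\in\mathcal{C}$, which is a contradiction since $\mathcal{C}$ has no simple of dimension $\sqrt{2}$. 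The shared twist is precisely what makes the balancing computation decisive, not an obstruction to it; and since $y^\ast$ lies in the same $E_2$-graded component as $y$ (exponent $2$) and that component contains only $y$ and $y'$, ruling out $y^\ast\cong y'$ immediately gives $y^\ast\cong y$ with nothing further to combine.
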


\par There are at most $\binom{8+2-1}{2}=36$ distinct products $\mathcal{I}\boxtimes\mathcal{I}'$ up to braided equivalence since $\boxtimes$ is symmetric, which we can sort by Witt equivalence \cite[Definition 5.1]{DMNO} prior to sorting each Witt equivalence class by braided equivalence.  If $\mathcal{I}=\mathcal{I}_j$ and $\mathcal{I}'=\mathcal{I}_k$ for some primitive 16th roots of unity $\zeta=\zeta_{16}^j,\zeta'=\zeta_{16}^k$, then the Witt equivalence classes of products $\mathcal{I}\boxtimes\mathcal{I}'$ are indexed by 8th roots of unity $\xi$ via $\xi:=f(\zeta)f(\zeta')/2$ where $f(\zeta):=\zeta^{-1}(\zeta^2+\zeta^{-2})$ \cite[Lemma B.24]{DGNO}.  When $\mathcal{I},\mathcal{I}'$ are equipped with their unique positive spherical structure, this is the multiplicative central charge \cite[Equation (8.60)]{tcat} of $\mathcal{I}\boxtimes\mathcal{I}'$.  Among Witt equivalence classes many of these products are braided equivalent.  In particular it is easy to check that in $\mathcal{I}\boxtimes\mathcal{I}'$, any of the 4 simple objects of dimension $\sqrt{2}$ $\otimes$-generates an Ising braided fusion category $\mathcal{I}''$ and $\mathcal{I}\boxtimes\mathcal{I}'\simeq \mathcal{I}''\boxtimes C_{\mathcal{I}\boxtimes\mathcal{I}'}(\mathcal{I}'')$ is a braided equivalence which implies there exist at most 2 distinct nontrivial factorizations of this type.  In particular, $\mathcal{I}_j\boxtimes\mathcal{I}_k\simeq\mathcal{I}_{j+8}\boxtimes\mathcal{I}_{k+8}$ is a braided equivalence where $j+8,k+8$ are considered modulo 16.  Once accounting for this symmetry, the remaining 20 braided equivalence classes of categories $\mathcal{I}\boxtimes\mathcal{I}'$, which we collect in column 3 of Figure \ref{fig:six}, are distinguished by the modular data associated to their unique positive spherical structure.

\par We further distinguish the braided equivalence classes of $\mathcal{I}\boxtimes\mathcal{I}'$ in column 4 of Figure \ref{fig:six} by identifying those whose integral braided fusion subcategories are equivalent.  This is straightforward because as fusion categories, $(\mathcal{C}(\chi_1^1,\tau)\boxtimes\mathcal{C}(\chi_1^1,\tau'))_\mathbb{Q}\simeq\mathcal{C}(\chi_2^1,\tau\tau')$ (see Lemma \ref{explicit}).  This shows there are at least 12 braided equivalence classes of $(\mathcal{I}\boxtimes\mathcal{I}')_\mathbb{Q}$.  As there are 12 braided equivalence classes of braided fusion categories $\mathcal{C}(\chi_2^1,\tau)$, then there are exactly 12 braided equivalence classes of $(\mathcal{I}_j\boxtimes\mathcal{I}_k)_\mathbb{Q}$ across all products $\mathcal{I}\boxtimes\mathcal{I}'$.

\begin{figure}[H]
\centering
\begin{equation*}
\begin{array}{|c|c|c|c|}\hline
\xi & (j,k) & \mathcal{I}_j\boxtimes\mathcal{I}_k\text{ br.eq. \!classes} & (\mathcal{I}_j\boxtimes\mathcal{I}_k)_\mathbb{Q}\text{ br.eq. \!classes}\\\hline
1 & (1,15),(3,13),(5,11),(7,9) & (1,15),(3,13) & (1,15) \\
\zeta_8 &  (1,5),(3,11),(7,7),(9,13),(15,15) & (1,5),(3,11),(7,7) & (1,5),(7,7) \\
i & (1,3),(5,15),(7,13),(9,11) & (1,3),(5,15) & (1,3)\\
\zeta_8^3 & (1,9),(3,15),(5,5),(7,11),(13,13) & (1,9),(3,15),(5,5) & (1,9),(5,5)\\
-1 & (1,7),(3,5),(9,15),(11,13) & (1,7),(3,5) & (1,7)\\
\zeta_8^5 & (1,13),(3,3),(5,9),(7,15),(11,11) & (1,13),(3,3),(7,15) & (1,13),(3,3) \\
-i & (1,11),(3,9),(5,7),(13,15) & (1,11),(5,7) & (1,11)\\
\zeta_8^7 & (1,1),(3,7),(5,13),(9,9),(11,15) & (1,1),(3,7),(5,13) & (1,1),(3,7) \\
\hline
\end{array}
\end{equation*}
    \caption{Braided equivalence classes of $\mathcal{I}\boxtimes\mathcal{I}'$ and $(\mathcal{I}\boxtimes\mathcal{I}')_\mathbb{Q}$}%
    \label{fig:six}%
\end{figure}

\begin{example}\label{structure}
The take-away from Figure \ref{fig:six} is that of the 12 braided equivalence classes of categories $\mathcal{C}(\chi_2^1,\tau)$, 8 have 2 minimal nondegenerate covers up to equivalence, and 4 have a unique minimal nondegenerate cover up to equivalence.  This is seemingly at odds with \cite[Theorem 1.1]{MR3613518} combined with \cite[Theorem 4.22]{MR3613518} which, since $\mathcal{C}(\chi_2^1,\tau)$ have unitary structures and symmetric center $\mathrm{Rep}(\mathbb{Z}/2\mathbb{Z})$, imply that there are exactly 2 minimal nondegenerate extensions of $\mathcal{C}(\chi_2^1,\tau)$ up to equivalence.  The difference between the minimal nondegenerate covers of $\mathcal{C}(\chi_2^1,\tau)$ and the minimal nondegenerate extensions of $\mathcal{C}(\chi_2^1,\tau)$ lies in the braided autoequivalences of $\mathcal{C}(\chi_2^1,\tau)$.

\par Recall \cite[Proposition 1]{MR1776075} that there is exactly one nontrivial element $P\in\mathrm{Aut}_\otimes^\mathrm{br}(\mathcal{C}(\chi_2^1,\tau))\cong\mathrm{Aut}_\otimes^\mathrm{br}((\mathcal{I}\boxtimes\mathcal{I}')_\mathbb{Q})$ which is the strict autoequivalence permuting $g_1\leftrightarrow g_2$.  Now assume that $P$ lifts to a braided autoequivalence $\tilde{P}$ of the cover $\mathcal{I}\boxtimes\mathcal{I}'$.  Then the obvious extension $(\mathcal{I}\boxtimes\mathcal{I}',\iota)$ given by the chosen basis $g_1,g_2$ of $E_2$ is equivalent to $(\mathcal{I}\boxtimes\mathcal{I}',\iota\circ P)$ precisely via the braided autoequivalence $\tilde{P}$.  Conversely, assume $F:(\mathcal{I}\boxtimes\mathcal{I}',\iota)\to(\mathcal{I}\boxtimes\mathcal{I}',\kappa)$ is an equivalence of extensions.  Then $\left.F\right|_{(\mathcal{I}\boxtimes\mathcal{I}')_\mathbb{Q}}$
is a braided autoequivalence of $(\mathcal{I}\boxtimes\mathcal{I}')_\mathbb{Q}$.  In particular, $P$ acts trivially on equivalence classes of minimal nondegenerate extensions of $(\mathcal{I}\boxtimes\mathcal{I}')_\mathbb{Q}$ if and only if $P$ lifts to a braided equivalence $\tilde{P}$ of $\mathcal{I}\boxtimes\mathcal{I}'$.  Any lifting $\tilde{P}$ of the braided autoequivalence permuting $g_1\leftrightarrow g_2$ must nontrivally permute the simple objects of dimension $\sqrt{2}$ (so the fusion rules coincide).  In the cases where $\mathcal{I}\boxtimes\mathcal{I}'$ has a unique minimal nondegenerate cover, all simple objects of dimension $\sqrt{2}$ have distinct twists, hence $(\mathcal{I}\boxtimes\mathcal{I}',\iota)$ and $(\mathcal{I}\boxtimes\mathcal{I}',P\circ\iota)$ are inequivalent as extensions.  In the cases where $\mathcal{I}\boxtimes\mathcal{I}'$ has two inequivalent minimal nondegenerate covers, $\zeta=\zeta'$ or $\zeta=-\zeta'$ which determines the lifting $\tilde{P}$ on the level of objects.  We graphically represent the two distinct situations in Figure \ref{fig:seven} for $\xi=\zeta_8$.
\end{example}

\begin{figure}[H]
\centering
\begin{tikzpicture}
\node at (-3,0) (a) {$(\mathcal{I}_1\boxtimes\mathcal{I}_5)_\mathbb{Q}$};
\node at (-3,3) (b) {$\mathcal{I}_1\boxtimes\mathcal{I}_5$};
\draw[->] (a) edge [in=330, out=30] (b);
\draw[->] (a) edge [in=210, out=150] (b);
\draw[<->] (-4,1.5) -- node[above] {$P$} (-2,1.5);
\node at (3,0) (c) {$(\mathcal{I}_7\boxtimes\mathcal{I}_7)_\mathbb{Q}$};
\node at (1,3) (d) {$\mathcal{I}_3\boxtimes\mathcal{I}_{11}$};
\node at (5,3) (e) {$\mathcal{I}_7\boxtimes\mathcal{I}_7$};
\draw[->] (c) -- (d);
\draw[->] (c) -- (e);
\node at (2,1.5) (f) {};
\node at (4,1.5) (g) {};
\draw[->] (f) edge [in=180, out=270,looseness=20] node[below left] {$P$} (f);
\draw[->] (g) edge [in=0, out=270,looseness=20] node[below right] {$P$} (g);
\end{tikzpicture}
    \caption{Minimal nondegenerate extensions of $(\mathcal{I}\boxtimes\mathcal{I}')_\mathbb{Q}$ with $\xi=\zeta_8$}%
    \label{fig:seven}%
\end{figure}
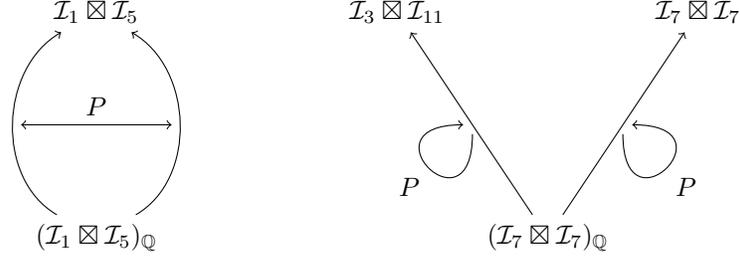

\section{General Tambara-Yamagami braided fusion categories}\label{jen}

If $\mathcal{C}(\chi_n^k,\tau)$ is symmetrically braided, then by the formulas for the braidings in Section \ref{TY}, we must have $k=0$.  All 4 of these symmetrically braided fusion categories possess minimal nondegenerate covers with $\mathrm{FPdim}(\mathcal{C}(\chi_n^k,\tau))^2=2^{4n+2}$, e.g.\ their doubles, $\mathcal{Z}(\mathcal{C}(\chi_n^k,\tau))$.  The remainder of this section describes existence/nonexistence of minimal nondegenerate covers in the nonsymmetrically braided case.  We prove in Section \ref{non} that there do not exist nondegenerate covers of $\mathcal{C}(\chi_{2n}^0,\tau)$ with Frobenius-Perron dimension $2^{4n+1}$.  We prove in Section \ref{nonnon} that there exist minimal nondegenerate covers of all $\mathcal{C}(\chi_n^1,\tau)$. 


\subsection{Nonsymmetric $\chi_{2n}^0$ braidings}\label{non}

The proof of the following lemma is a generalization of the explanation in Section \ref{firstexample}.

\begin{lemma}\label{begin}
Let nonsymmetrically braided $\mathcal{C}:=\mathcal{C}(\chi_{2n}^0,\tau)$ be given for some $n\in\mathbb{Z}_{\geq1}$.  Let $\mathcal{D}$ be a nondegenerate cover of $\mathcal{C}$ with $\mathrm{FPdim}(\mathcal{D})=2^{4n+1}$ and consider $\mathcal{D}$ as a modular tensor category equipped with its unique positive spherical structure.  Then $\theta_y$ is a primitive 16th root of unity for all $y\in\mathcal{O}(\mathcal{D})\setminus\mathcal{O}(\mathcal{C})$.
\end{lemma}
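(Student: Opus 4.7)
The plan is to generalize the argument for the $n=1$ case in Section \ref{firstexample} step by step, with the end goal of computing $S_{y,y}$ for $y\in \mathcal{O}(\mathcal{D})\setminus\mathcal{O}(\mathcal{C})$ and extracting a reality constraint on $\theta_y$.

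First I would pin down the gross structure of $\mathcal{D}$. Since $\mathrm{FPdim}(\mathcal{D}) = 2^{4n+1}$ is a prime power, $\mathcal{D}$ is nilpotent by \cite[Example 4.5]{nilgelaki}, and its adjoint subcategory $\mathcal{D}_\mathrm{ad}$ is integral by \cite[Lemma 1.1]{2019arXiv191212260G}. Lemma \ref{previous} gives $C_\mathcal{D}(\mathcal{C}) = \mathcal{C}_\mathrm{pt}$, and combined with the containment $\mathcal{C}_\mathrm{pt}\subset\mathcal{D}_\mathrm{ad}$, the dimension identity $\dim(\mathcal{D})=\dim(\mathcal{D}_\mathrm{pt})\dim(\mathcal{D}_\mathrm{ad})$, and the divisibility $\dim(y)^2\mid \dim(\mathcal{D}_\mathrm{ad})$ coming from \cite[Theorem 5.2]{nilgelaki}, the objective is to force $\mathcal{D}_\mathrm{ad}=\mathcal{C}$ and $\mathcal{D}_\mathrm{pt}=\mathcal{C}_\mathrm{pt}$. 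I expect this is the main technical obstacle: intermediate possibilities (a larger pointed part, or extra integral simple objects of various intermediate dimensions $2^j$ in the trivial component) must be excluded by generalizing the Ising exclusion argument from the $n=1$ case, where any potential Ising-type subcategory $\otimes$-generated by a stray object would have a non-Tannakian pointed part, contradicting that $\mathcal{C}_\mathrm{pt}$ is Tannakian.

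Once this is done, $\theta_x$ being a primitive 4th root of unity follows from the formulas \cite[Section 2.3(2)]{siehler} together with the assumption that $\mathcal{C}$ is not symmetric. For $y\in\mathcal{O}(\mathcal{D})\setminus\mathcal{O}(\mathcal{C})$, I would then establish $y\cong y^\ast$ using the fact that the universal grading group is an elementary abelian $2$-group (so $y^\ast$ lies in the same graded component as $y$) together with an orthogonality relation between $y$ and $x$ analogous to the one used in Section \ref{firstexample} to show $\theta_y = -\theta_{y'}$. A parallel orthogonality argument for $y$ paired with simple objects of $\mathcal{C}$ should also give vanishing of $S_{x,y}$ and of many off-diagonal entries, which will be needed to evaluate the balancing sum below.

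The final computation applies the balancing equation to $y\otimes y$, whose decomposition lies in $\mathcal{O}(\mathcal{C})$ since $\mathcal{C}=\mathcal{D}_\mathrm{ad}$:
\begin{equation*}
S_{y,y} = \theta_y^{-2}\Bigl(\sum_{g\in E_{2n}} N_{y,y}^{g} + N_{y,y}^{x}\,\theta_x\,2^{n}\Bigr).
\end{equation*}
Evaluating the fusion coefficients via the Verlinde formula using the vanishing results of the previous step should collapse this to an expression of the form $r\,\theta_y^{-2}(1+\theta_x)$ with $r\in\mathbb{R}_{>0}$. Since $1\pm i = \sqrt{2}\,\zeta_8^{\pm 1}$ and $y\cong y^\ast$ forces $S_{y,y}\in\mathbb{R}$, the product $r\,\theta_y^{-2}\sqrt{2}\,\zeta_8^{\pm 1}$ is real only when $\theta_y^{2}$ is a primitive $8$th root of unity, equivalently $\theta_y$ is a primitive $16$th root of unity.
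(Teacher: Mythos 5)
The overall arc of your plan matches the paper's Lemma \ref{begin}: pin down the grading, kill the $S$-matrix column of $x$ against objects outside $\mathcal{C}$, extract fusion coefficients via Verlinde, and finish with a reality constraint from the balancing equation. But there is a genuine gap at the self-duality step, and your route to $\mathcal{D}_\mathrm{pt}=\mathcal{C}_\mathrm{pt}$ is a heavier lift than necessary.

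The serious gap is your claim that $y\cong y^\ast$ for all $y\in\mathcal{O}(\mathcal{D})\setminus\mathcal{O}(\mathcal{C})$. In the $n=1$ case this worked because each nontrivial graded component is an orbit of size exactly two, so the orthogonality relation $\sum_{g\in E_2}S_{y,g}\overline{S_{x,g}}=0$ reduces to $\theta_y+\theta_{y'}=0$, and then $\theta_{y^\ast}=\theta_y$ forces $y^\ast\not\cong y'$. For general $n$, the orbit of $y$ under $E_{2n}$ has size $2^{2n+1}/\dim(y)^2$, which is typically much larger than two, and the same orthogonality relation only yields $\sum_{y'\in\mathrm{orbit}(y)}\theta_{y'}=0$. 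That is nowhere near enough to conclude $y\cong y^\ast$, and there is no reason to expect self-duality in general. This breaks your final computation, which is built on the balancing equation for $y\otimes y$ and the coefficients $N_{y,y}^g$, $N_{y,y}^x$: without self-duality you cannot identify those with the coefficients $N_{y,y^\ast}^g$, $N_{y,y^\ast}^x$ that the Verlinde/orbit-stabilizer argument actually gives you. The paper sidesteps this entirely by working with $y\otimes y^\ast$ throughout. It observes only that $y^\ast\cong g\otimes y$ for \emph{some} $g\in E_{2n}$ (because duality permutes a graded component, and each nontrivial component is a single $E_{2n}$-orbit), and then deduces reality of $S_{y,y}$ from the multiplicativity identity $S_{y,g\otimes y}=\frac{1}{\dim(y)}S_{g,y}S_{y,y}$ of \cite[Proposition 8.13.10]{tcat} together with $\theta_{y^\ast}=\theta_y$: this yields the chain $\overline{S_{y,y}}=S_{y,y^\ast}=\frac{\theta_{g\otimes y}}{\theta_y}S_{y,y}=S_{y,y}$. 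No self-duality is required.

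A secondary, softer point: your plan for forcing $\mathcal{D}_\mathrm{ad}=\mathcal{C}$ and $\mathcal{D}_\mathrm{pt}=\mathcal{C}_\mathrm{pt}$ is to generalize the dimension casework from Section \ref{firstexample}, which for $n>1$ would have to handle stray objects of many possible dimensions $2^{j/2}$ and is not merely an ``Ising exclusion'' anymore. The paper's proof of Lemma \ref{begin} replaces that casework with a one-line centralizer argument: if $\dim(\mathcal{D}_\mathrm{pt})\geq 2^{2n+1}$ then $\dim(\mathcal{D}_\mathrm{ad})\leq 2^{2n}$, forcing $\mathcal{D}_\mathrm{ad}=\mathcal{C}_\mathrm{pt}$, and then $\mathcal{C}=C_\mathcal{C}(\mathcal{C}_\mathrm{pt})\subset C_\mathcal{D}(\mathcal{D}_\mathrm{ad})=\mathcal{D}_\mathrm{pt}$, which is absurd. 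You should adopt that argument rather than trying to extend the $n=1$ casework.
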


\begin{proof}
Let $\mathcal{C}:=\mathcal{C}(\chi_{2n}^0,\tau)$ be given with $\mathcal{O}(\mathcal{C}_\mathrm{pt})=E_{2n}=(\mathbb{Z}/2\mathbb{Z})^{\oplus 2n}$ and a unique isomorphism class of simple objects $x$.  We have $C_\mathcal{C}(\mathcal{C})=\mathcal{C}_\mathrm{pt}$ is Tannakian with $\mathrm{FPdim}(\mathcal{C}_\mathrm{pt})=2^{2n}$; denote the regular algebra by $R$.  Let $\mathcal{D}$ be a nondegenerate cover with $\mathrm{FPdim}(\mathcal{D})=2^{4n+1}$.  When $\mathcal{D}$ (and thus $\mathcal{C}$) are equipped with their unique positive spherical structure, then $\mathcal{D}_R^0$ is a rank 2 pointed modular tensor category by \cite[Corollary 3.32]{DMNO}, hence its multiplicative central charge is $\xi(\mathcal{D})=\zeta_8^{\pm}$ \cite[Remark 5.29]{DMNO}.  On the other hand, $C_\mathcal{D}(\mathcal{C}_\mathrm{pt})=\mathcal{C}$ by Lemma \ref{previous}, hence $\xi(\mathcal{D})\sqrt{2}=1+\theta_x$ from \cite[Example 3.14]{DMNO} and the definition \cite[Section 6.2]{DGNO}.  We reserve this fact for later use.
\par Also, since $C_\mathcal{D}(\mathcal{D}_\mathrm{pt})=\mathcal{D}_\mathrm{ad}$ \cite[Corollary 8.22.8]{tcat}, if $\dim(\mathcal{D}_\mathrm{pt})\geq 2^{2n+1}$, then $\dim(\mathcal{D}_\mathrm{ad})\leq 2^{2n}$ by Equation (\ref{wanpisu}).  But $\mathcal{C}_\mathrm{pt}\subset\mathcal{D}_\mathrm{ad}$, therefore $\mathcal{D}_\mathrm{ad}=\mathcal{C}_\mathrm{pt}$ in this case.  Therefore,
\begin{equation}
\mathcal{C}=C_\mathcal{C}(\mathcal{C}_\mathrm{pt})\subset C_\mathcal{D}(\mathcal{C}_\mathrm{pt})=C_\mathcal{D}(\mathcal{D}_\mathrm{ad})=\mathcal{D}_\mathrm{pt},
\end{equation}
a contradiction.  Moreover we may conclude that $\mathcal{D}_\mathrm{pt}=\mathcal{C}_\mathrm{pt}$ and thus $\mathcal{D}_\mathrm{ad}=\mathcal{C}$.

\par Now let $g\in E_{2n}$ and $y\in\mathcal{O}(\mathcal{D})$. The balancing equation states that $S_{g,y}=\theta_g^{-1}\theta_y^{-1}\theta_{g\otimes y}\dim(g\otimes y)$, hence $|S_{g,y}|^2=\dim(y)^2$.  Also, $S_{x,x}=S_{x,x^\ast}=\theta_x^{-2}2^{2n}$.  Thus $|S_{x,x}|^2=2^{4n}$.  The orthogonality relation of $x$ with itself yields
\begin{align}
\dim(\mathcal{D})&=\sum_{g\in E_{2n}}|S_{g,x}|^2+|S_{x,x}|^2+\sum_{y\in\mathcal{O}(\mathcal{D})\setminus\mathcal{O}(\mathcal{C})}|S_{y,x}|^2 \\
&=2^{2n}\cdot 2^{2n}+2^{4n}+\sum_{y\in\mathcal{O}(\mathcal{D})\setminus\mathcal{O}(\mathcal{C})}|S_{y,x}|^2 \\
&=\dim(\mathcal{D})+\sum_{y\in\mathcal{O}(\mathcal{D})\setminus\mathcal{O}(\mathcal{C})}|S_{y,x}|^2.
\end{align}
Therefore $\sum_{y\in\mathcal{O}(\mathcal{D})\setminus\mathcal{O}(\mathcal{C})}|S_{y,x}|^2=0$ and moreover $S_{x,y}=S_{y,x}=0$ for every $y\in\mathcal{O}(\mathcal{D})\setminus\mathcal{O}(\mathcal{C})$.  Verlinde formula then implies for all $y\in\mathcal{O}(\mathcal{D})\setminus\mathcal{O}(\mathcal{C})$,
\begin{equation}
\dim(\mathcal{D})N_{x,y}^y=\sum_{g\in E_{2n}}\dfrac{S_{x,g}S_{y,g}\overline{S_{y,g}}}{\dim(g)}=\sum_{g\in E_{2n}}S_{x,g}|S_{y,g}|^2,
\end{equation}
since for every $z\in\mathcal{O}(\mathcal{D})\setminus E_{2n}$, we have shown either $S_{x,z}=0$ or $S_{y,z}=0$.  Moreover, we have already computed the remaning $S$-matrix entries above, implying
\begin{equation}
N^x_{y,y^\ast}=N_{x,y}^y=\dfrac{1}{2^{4n+1}}\sum_{g\in E_{2n}}S_{x,g}|S_{y,g}|^2
=\dfrac{1}{2^{4n+1}}\sum_{g\in E_{2n}}2^n\dim(y)^2=\dfrac{\dim(y)^2}{2^{n+1}}.
\end{equation}
Since $y\otimes y^\ast\cong N_{y,y^\ast}^xx\oplus\bigoplus_{g\in E_{2n}}N_{y,y^\ast}^gg$, we may compute the dimension of $y\otimes y^\ast$ as
\begin{align}
&&\dim(y)^2&=\sum_{g\in E_{2n}}N_{y,y^\ast}^g+\dfrac{\dim(y)^2}{2^{n+1}}2^n \\
\Rightarrow&&\sum_{g\in E_{2n}}N_{y,y^\ast}^g&=\dim(y)^2/2,
\end{align}
which is the order of the stabilizer subgroup of $y$ under the $\otimes$-action of $E_{2n}$.  The orbit-stabilizer thereom then implies that the cardinality of the orbit of $y$ is $2^{2n+1}/\dim(y)^2$.  Moreover, by computing the dimension of this orbit, by \cite[Theorem 3.5.2]{tcat} each nontrivial graded component of $\mathcal{D}$ consists of the orbit of a single simple object under the $\otimes$-action of $E_{2n}$.  In particular, this implies there exists $g\in E_{2n}$ such that $ y^\ast\cong g\otimes y$ since the universal grading group is an elementary abelian $2$-group.
\par Lastly we compute with the balancing equation \cite[Proposition 8.13.8]{tcat},
\begin{align}
S_{y,y^\ast}&=\theta_y^{-2}\left(\sum_{g\in E_{2n}}N_{y,y^\ast}^g\dim(g)\theta_g+N_{y,y^\ast}^x\dim(x)\theta_x\right) \\
&=\theta_y^{-2}\left(\dfrac{\dim(y)^2}{2}+\dfrac{\dim(y)^2}{2^{n+1}}2^n\theta_x\right) \\
&=\dfrac{\dim(y)^2\xi(\mathcal{D})\sqrt{2}}{2\theta_y^2}.\label{twentyfive}
\end{align}
As $y^\ast\cong g\otimes y$ for $g\in G$, then by a trivial application of \cite[Proposition 8.13.10]{tcat} and recalling that $\theta_{y^\ast}=\theta_y$, we have
\begin{equation}
\overline{S_{y,y}}=S_{y,y^\ast}=S_{y,g\otimes y}=\dfrac{1}{\dim(y)}S_{g,y}S_{y,y}=\dfrac{\theta_{g\otimes y}}{\theta_y}S_{y,y}=\dfrac{\theta_{y^\ast}}{\theta_y}S_{y,y}=S_{y,y}.
\end{equation}
Moreover $S_{y,y}=S_{y,y^\ast}$ is real and we may conclude from Equation (\ref{twentyfive}) that $\theta_y^2=\pm\xi(\mathcal{D})$, i.e.\ $\theta_y$ is a primitive $16$th root of unity for all $y\in\mathcal{O}(\mathcal{D})\setminus\mathcal{O}(\mathcal{C})$.
\end{proof}

\begin{lemma}\label{int}
Let nonsymmetrically braided $\mathcal{C}:=\mathcal{C}(\chi_{2n}^0,\tau)$ be given for some $n\in\mathbb{Z}_{\geq1}$.  If $\mathcal{D}$ is a nondegenerate cover of $\mathcal{C}$ with $\mathrm{FPdim}(\mathcal{D})=2^{4n+1}$, then $\mathcal{D}$ is integral.
\end{lemma}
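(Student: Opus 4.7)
The plan is to form the Deligne product $\mathcal{D}\boxtimes\mathcal{P}$ for a carefully chosen rank-$2$ pointed modular tensor category $\mathcal{P}$, exhibit a symmetric Lagrangian subcategory inside, and identify $\mathcal{D}\boxtimes\mathcal{P}$ with a (twisted) double of a $2$-group, whence integrality of $\mathcal{D}$ will follow by observing that the simple objects of $\mathcal{D}\boxtimes\mathcal{P}$ are precisely the $y\boxtimes z$ for $y\in\mathcal{O}(\mathcal{D})$ and $z\in\mathcal{O}(\mathcal{P})$, with Frobenius-Perron dimensions $\dim(y)\dim(z)=\dim(y)$.

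Specifically, take $\mathcal{P}$ to be the rank-$2$ pointed modular category whose nontrivial simple $x'$ satisfies $\theta_{x'}=\theta_x^{-1}$; then $\mathcal{D}\boxtimes\mathcal{P}$ is modular of Frobenius-Perron dimension $2^{4n+2}$. Let $\mathcal{E}\subset\mathcal{D}\boxtimes\mathcal{P}$ be the fusion subcategory $\otimes$-generated by $\mathcal{C}_\mathrm{pt}\boxtimes\mathbbm{1}$ and $x\boxtimes x'$. An explicit fusion-rule computation using $x\otimes x=\bigoplus_{g\in E_{2n}}g$ and $x'\otimes x'=\mathbbm{1}$ shows $\mathcal{O}(\mathcal{E})=\{g\boxtimes\mathbbm{1}:g\in E_{2n}\}\cup\{x\boxtimes x'\}$, hence $\mathrm{FPdim}(\mathcal{E})=2^{2n+1}=\sqrt{\mathrm{FPdim}(\mathcal{D}\boxtimes\mathcal{P})}$. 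Moreover, $\mathcal{E}$ is symmetric with trivial twists: each $g\boxtimes\mathbbm{1}$ has twist $1$ and centralizes $x\boxtimes x'$ (since $g$ centralizes $x$ in $\mathcal{D}_\mathrm{ad}=\mathcal{C}$), while $\theta_{x\boxtimes x'}=\theta_x\theta_{x'}=1$, and the double braiding of $x\boxtimes x'$ with itself acts as $\theta_{g\boxtimes\mathbbm{1}}/\theta_{x\boxtimes x'}^2=1$ on every simple summand $g\boxtimes\mathbbm{1}$ of $(x\boxtimes x')\otimes(x\boxtimes x')$.

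Thus $\mathcal{E}$ is a symmetric Lagrangian subcategory of $\mathcal{D}\boxtimes\mathcal{P}$ with trivial twists, so by Deligne's theorem $\mathcal{E}\simeq\mathrm{Rep}(G,z)$ for some group $G$ of order $2^{2n+1}$ and central $z$ of order dividing $2$. In the Tannakian case ($z=e$), \cite[Theorem 4.5]{drinfeld2007grouptheoretical} gives $\mathcal{D}\boxtimes\mathcal{P}\simeq\mathcal{Z}(\mathrm{Vec}_G^\omega)$ for some $\omega\in H^3(G,\mathbb{C}^\times)$, which is integral because $G$ is a $2$-group; in the super-Tannakian case, an analogous reconstruction identifies $\mathcal{D}\boxtimes\mathcal{P}$ as a minimal modular extension of a super-Tannakian symmetric category of a $2$-group, which is likewise integral. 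Either way, every simple of $\mathcal{D}\boxtimes\mathcal{P}$ has integer Frobenius-Perron dimension, and since $\mathcal{O}(\mathcal{P})$ consists of invertible objects, every simple of $\mathcal{D}$ also has integer dimension.

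The principal obstacle is the reconstruction step in the super-Tannakian case: if one can verify via Frobenius-Schur indicator computations for $x\in\mathcal{C}$ and $x'\in\mathcal{P}$ that $\mathcal{E}$ is always Tannakian whenever $\mathcal{D}$ exists, the Tannakian branch alone suffices and the proof reduces to a direct application of \cite[Theorem 4.5]{drinfeld2007grouptheoretical}; otherwise, the super-Tannakian analogue must be invoked.
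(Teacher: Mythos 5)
Your proof is correct and takes a genuinely different---and arguably slicker---route than the paper.  The paper argues by contradiction: assuming a simple object $z$ with $\mathrm{FPdim}(z)\in\mathbb{Q}(\sqrt{2})\setminus\mathbb{Q}$ exists, it uses the graded component of $z$ together with $\mathcal{D}_\mathrm{ad}=\mathcal{C}$ to generate a subcategory, passes to local modules over the regular algebra of a pointed centralizer, and finds the resulting rank-4 category would have simple dimensions $1,1,2,\sqrt{2}$, contradicting \cite[Proposition 8.15]{ENO} since $6\nmid8$.  You instead exhibit the symmetric Lagrangian subcategory of $\mathcal{D}\boxtimes\mathcal{P}$ directly and invoke the reconstruction theorem \cite[Theorem 4.5]{drinfeld2007grouptheoretical}, which does not require integrality as a hypothesis; this gives $\mathcal{D}\boxtimes\mathcal{P}\simeq\mathcal{Z}(\mathrm{Vec}_G^\omega)$ and hence integrality of $\mathcal{D}$.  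In effect you perform the Lagrangian argument that the paper postpones to Proposition~\ref{bigprop} (where it appears only after integrality is already known, via \cite[Theorem 1.3]{drinfeld2007grouptheoretical}), and observe that it can be run first.  Note that your choice of $\mathcal{P}$ silently requires $\theta_x$ to be a primitive fourth root of unity; that is supplied by Lemma~\ref{begin}, which is an implicit prerequisite of your argument exactly as it is of the paper's.

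The one thing to tighten is your closing hedge.  The ``super-Tannakian analogue'' you gesture at, namely that a super-Tannakian Lagrangian subcategory would also force integrality, is in fact false: the Ising braided fusion categories are minimal nondegenerate extensions of $\mathrm{sVec}$ and are not integral.  But the super-Tannakian branch never arises, because your twist computation already settles it.  A symmetric fusion subcategory of a modular tensor category inherits the ambient unique positive spherical structure; with that structure the twists are $\pm1$ and, under the Deligne identification $\mathcal{E}\simeq\mathrm{Rep}(G,z)$, equal $z|_V$ on each simple $V$, so $\mathcal{E}$ is Tannakian if and only if every twist equals $1$.  You have already checked $\theta_{g\boxtimes\mathbbm{1}}=1$ (using that $\mathcal{C}_\mathrm{pt}$ is Tannakian) and $\theta_{x\boxtimes x'}=\theta_x\theta_x^{-1}=1$, so $\mathcal{E}$ is Tannakian outright and no Frobenius--Schur indicator computation is needed.
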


\begin{proof}
As $\mathcal{D}$ is a weakly integral modular tensor category, it may be equipped with its unique positive spherical structure, so that for all $y\in\mathcal{O}(\mathcal{D})$, $\mathbb{Q}(\mathrm{FPdim}(y))$ is either $\mathbb{Q}$ or $\mathbb{Q}(\sqrt{2})$ by \cite[Proposition 8.14.6]{tcat} and \cite[Proposition 1.4]{2019arXiv191212260G}.  If there exists $z\in\mathcal{O}(\mathcal{D})$ with $\mathbb{Q}(\mathrm{FPdim}(z))=\mathbb{Q}(\sqrt{2})$, then the graded component containing $z$ along with the trivial component $\mathcal{D}_\mathrm{ad}=\mathcal{C}$, generate a fusion subcategory $\mathcal{E}\subset\mathcal{D}$ with $\mathrm{FPdim}(\mathcal{E})=2^{2n+2}$, as the universal grading group has exponent 2.  From the fact that $\mathcal{C}\subsetneq\mathcal{E}$,  $C_\mathcal{D}(\mathcal{E})$ is pointed with $\mathrm{FPdim}(C_\mathcal{D}(\mathcal{E}))=2^{2n-1}$; let $R$ be the regular algebra.  Then $\mathrm{FPdim}(\mathcal{D}_R^0)=8$ by \cite[Corollary 3.32]{DMNO} and the Frobenius-Perron dimensions of simple objects are known because $\mathcal{O}(\mathcal{D}_R^0)$ corresponds to the isomorphism classes of simple summands of the free $R$-modules $R\otimes w$ for $w\in\mathcal{O}(\mathcal{E})$ (Section \ref{doubles}).  There are two nonisomorphic invertible objects in $\mathcal{O}(\mathcal{D}_R^0)$ corresponding to the free $R$-modules on $w\in\mathcal{O}(\mathcal{D}_\mathrm{pt})$.  The simple summands of $R\otimes x$ must all be isomorphic with Frobenius-Perron dimension 2, or else the sum of their dimensions is greater than or equal to 8.  The remaining simple objects of $\mathcal{D}_R^0$ correspond to the (necessarily isomorphic) simple summands of the free $R$-module $R\otimes z$ which must have Frobenius-Perron dimension $\sqrt{2}$.  Moreover the list of dimensions of $\mathcal{D}_R^0$ is $1,1,2,\sqrt{2}$.  But no such fusion category exists since $\mathrm{FPdim}((\mathcal{D}_R^0)_\mathbb{Q})=6$ does not divide $\mathrm{FPdim}(\mathcal{D}_R^0)=8$, violating \cite[Proposition 8.15]{ENO}.

\end{proof}

\begin{proposition}\label{bigprop}
Let nonsymmetrically braided $\mathcal{C}:=\mathcal{C}(\chi_{2n}^0,\tau)$ be given for some $n\in\mathbb{Z}_{\geq1}$.  If there exists a nondegenerate cover $\mathcal{D}$ of $\mathcal{C}$ with $\mathrm{FPdim}(\mathcal{D})=2^{4n+1}$, then $n=1$.
\end{proposition}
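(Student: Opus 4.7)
The plan is to extend the analysis of Section \ref{firstexample} (specifically the proof of Lemma \ref{q8}) from $n=1$ to arbitrary $n$, showing that any potential minimal nondegenerate cover $\mathcal{D}$ of dimension $2^{4n+1}$ forces $\mathcal{D}\boxtimes\mathcal{P}$ to be a twisted Drinfeld double of an extraspecial 2-group, and then derive an arithmetic/combinatorial obstruction for $n\geq 2$.

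First, I invoke Lemmas \ref{begin} and \ref{int}: $\mathcal{D}$ is an integral modular tensor category with $\mathcal{D}_\mathrm{pt}=\mathcal{C}_\mathrm{pt}$, $\mathcal{D}_\mathrm{ad}=\mathcal{C}$, and each $y\in\mathcal{O}(\mathcal{D})\setminus\mathcal{O}(\mathcal{C})$ has $\theta_y$ a primitive 16th root of unity. Since $\theta_x=\pm i$, I choose $\mathcal{P}$ to be the unique (up to braided equivalence) rank 2 pointed modular tensor category whose nontrivial simple $x'$ satisfies $\theta_{x'}=\theta_x^{-1}$, so that $\xi(\mathcal{D})\xi(\mathcal{P})=1$.

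Second, I verify that the subcategory $\mathcal{L}:=\langle\mathcal{C}_\mathrm{pt},\,x\boxtimes x'\rangle\subset\mathcal{D}\boxtimes\mathcal{P}$ is Lagrangian. Its simple objects are the $2^{2n}$ invertibles $g\boxtimes\mathbbm{1}$ together with $x\boxtimes x'$, giving total Frobenius-Perron dimension $2^{2n}+(2^n)^2=2^{2n+1}=\sqrt{\dim(\mathcal{D}\boxtimes\mathcal{P})}$. The Tannakian property follows from $\theta_{x\boxtimes x'}=\theta_x\theta_{x'}=1$ and the computation that the double braiding of $x\boxtimes x'$ with itself equals $\theta_x^{-2}\theta_{x'}^{-2}=1$ on each simple summand of $(x\boxtimes x')^{\otimes 2}=(\oplus_{g\in E_{2n}}g)\boxtimes\mathbbm{1}$, combined with the fact that $\mathcal{C}_\mathrm{pt}\subset C_\mathcal{C}(x)$.

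Third, I apply the reconstruction theorem \cite[Theorem 4.5]{drinfeld2007grouptheoretical} to conclude $\mathcal{D}\boxtimes\mathcal{P}\simeq\mathcal{Z}(\mathrm{Vec}_G^\omega)$ where $\mathrm{Rep}(G)\simeq\mathcal{L}$. Since $\mathcal{L}$ has the fusion rules of an $E_{2n}$ Tambara-Yamagami category, its character ring is that of an extraspecial 2-group, and Lemma \ref{extraspeciallem} forces $G\cong 2^{1+2n}_\pm$ of order $2^{2n+1}$. Because the universal gradings multiply across Deligne products, we also obtain the crucial identification $\mathcal{Z}(\mathrm{Vec}_G^\omega)_\mathrm{ad}=\mathcal{D}_\mathrm{ad}\boxtimes\mathcal{P}_\mathrm{ad}=\mathcal{C}$, so the nonsymmetrically braided Tambara-Yamagami category $\mathcal{C}$ must be realized as the adjoint subcategory of a twisted double of an extraspecial 2-group of order $2^{2n+1}$.

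The final and hardest step is to show that for $n\geq 2$ no pair $(G,\omega)$ with $G=2^{1+2n}_\pm$ yields $\mathcal{Z}(\mathrm{Vec}_G^\omega)_\mathrm{ad}$ braided equivalent to a nonsymmetric $\mathcal{C}(\chi_{2n}^0,\tau)$. For $n=1$ this corresponds to the four generators of $H^3(Q_8,\mathbb{C}^\times)\cong\mathbb{Z}/8\mathbb{Z}$ (Lemma \ref{q8}); for $n\geq 2$ I would use the detailed orbit structure from the proof of Lemma \ref{begin} -- namely that each nontrivial graded component of $\mathcal{D}$ is a single $E_{2n}$-orbit of dimensions $2^k$ with $\lceil(n+1)/2\rceil\leq k\leq n$ -- and match these orbits against the non-central conjugacy classes of $G$ via the slant cocycles $\omega_g\in H^2(Z_G(g),\mathbb{C}^\times)$. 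The matching requires $|\mathrm{Rad}(\omega_g)|=2^{2n-2k+2}$ whenever the corresponding orbit has dimension $2^k$, and the resulting constraints on projective representation dimensions across all $2^{2n}-1$ non-central conjugacy classes, combined with the central-class contributions yielding twists $\pi_{2^n}(z)\in\{1,\pm i\}$, over-constrain $\omega\in H^3(2^{1+2n}_\pm,\mathbb{C}^\times)$ to be simultaneously compatible. I expect the main obstacle to be this last step; concretely, the hard part is extracting the precise cohomological statement that forces $n=1$, either through a global invariant like the multiplicative central charge together with the conductor (which equals 16 for $\mathcal{D}\boxtimes\mathcal{P}$), or through an explicit incompatibility between the required slant-product data and the actual structure of $H^3$ of higher-rank extraspecial 2-groups.
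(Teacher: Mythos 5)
Your steps leading up to the final contradiction follow the paper's route almost exactly: invoke Lemmas \ref{begin} and \ref{int}, pair $\mathcal{D}$ with a rank-$2$ pointed $\mathcal{P}$ cancelling the central charge, identify the Lagrangian subcategory $\langle\mathcal{C}_\mathrm{pt},x\boxtimes x'\rangle$, apply the reconstruction theorem from \cite{drinfeld2007grouptheoretical}, and use Lemma \ref{extraspeciallem} to force $G$ to be extraspecial of order $2^{2n+1}$. That part is sound.

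The genuine gap is the last step, which you yourself flag as not carried out. You propose two candidate routes: a detailed slant-product/orbit-matching analysis of $H^3(2^{1+2n}_\pm,\mathbb{C}^\times)$, or a global invariant such as the conductor. The paper uses the second route, and it is essentially a one-line argument once one has the right citation: by \cite[Theorem 4.7]{MR2333187}, the Frobenius--Schur exponent (conductor) of $\mathcal{Z}(\mathrm{Vec}_G^\omega)$ is at most $8$ whenever $G$ is an extraspecial $2$-group of order greater than $2^3$, for any $\omega$. But Lemma \ref{begin} guarantees that $\mathcal{D}$, and hence $\mathcal{D}\boxtimes\mathcal{P}\simeq\mathcal{Z}(\mathrm{Vec}_G^\omega)$, contains simple objects whose twists are primitive $16$th roots of unity. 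These two facts are incompatible unless $|G|=2^3$, i.e.\ $n=1$. Your first proposed route (matching orbits against non-central conjugacy classes and radicals of slant cocycles) is in spirit what a proof of the cited bound would look like, but it is far more work than needed and you give no indication of how the over-constraint would actually be closed. Without the FS-exponent bound or an equivalent fact, the argument does not terminate; with it, your setup finishes cleanly and matches the paper.
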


\begin{proof}
Let $\mathcal{C}:=\mathcal{C}(\chi_{2n}^0,\tau)$ be given and assume $\mathcal{D}$ is a nondegenerate cover of $\mathcal{C}$ with $\mathrm{FPdim}(\mathcal{D})=2^{4n+1}$, which is a modular tensor category equipped with its unique positive spherical structure.  Set $\mathcal{P}$ to be the pointed modular tensor category of rank 2 such that $\xi(\mathcal{D})=\xi(\mathcal{P})^{-1}$.  Note that this implies the nontrivial simple object $h\in\mathcal{O}(\mathcal{P})$ has twist $\theta_h=\theta^{-1}_x$ by construction, where $x\in\mathcal{O}(\mathcal{C})$ is the unique noninvertible simple object.   Lemma \ref{int} ensures that $\mathcal{D}$ is integral, hence $\mathcal{D}\boxtimes\mathcal{P}\simeq\mathcal{Z}(\mathrm{Vec}_G^\omega)$ is a braided equivalence for some finite group $G$ of order $2^{2n+1}$ and $\omega\in H^3(G,\mathbb{C}^\times)$ \cite[Theorem 1.3]{drinfeld2007grouptheoretical}.  Lemma \ref{begin} implies that $x\boxtimes h$ is the unique noninvertible simple object $z\in\mathcal{O}(\mathcal{D}\boxtimes\mathcal{P})$ such that $\theta_z=1$, while $g\boxtimes e$ for $g\in E_{2n}$ are the only invertible objects with trivial twist.  Therefore the fusion subcategory generated by these simple objects is the unique Lagrangian subcategory of $\mathcal{Z}(\mathrm{Vec}_G^\omega)$, and has the fusion rules of the character ring of an extraspecial 2-group.  Moreover $G$ is isomorphic to an extraspecial $2$-group by Lemma \ref{extraspecial}.  If $n>1$ this is impossible due to Lemma \ref{begin} since the conductor (Frobenius-Schur exponent) of $\mathcal{Z}(\mathrm{Vec}_G^\omega)$ is less than or equal to 8 for extra-special $2$-groups $G$ of order greater than $2^3$ \cite[Theorem 4.7]{MR2333187}.
\end{proof}


\subsection{$\chi_n^1$ braidings}\label{nonnon}

Let $\mathcal{C}:=\mathcal{C}(\chi_n^1,\tau)$ be given.  Recall from Section \ref{TY} that $\mathrm{FPdim}(\mathcal{C})=2^{n+1}$ and $\mathrm{FPdim}(C_\mathcal{C}(\mathcal{C}))=2^{n-1}$.  Here we will describe a nondegenerate cover of $\mathcal{C}(\chi_n^1,\tau)$ with Frobenius-Perron dimension $2^{2n}$ which is the absolute minimum possible acccording to Equation (\ref{wanpisu}).

\begin{lemma}\label{explicit}
Let $n\in\mathbb{Z}_{\geq1}$ and $\tau_j=\pm1/\sqrt{2}$ for $1\leq j\leq n$.  Let $\mathcal{C}(\chi_1^1,\tau_j)$ be Ising braided fusion categories with braiding data $q_j:E_1\times E_1\to\mathbb{C}^\times$ and $\alpha_j\in\mathbb{C}$ as in Section \ref{TY}.  There is an equivalence of braided fusion categories
\begin{equation}\label{right}
\left(\boxtimes_{j=1}^n\mathcal{C}(\chi_1^1,\tau_j)\right)_0\simeq\mathcal{C}\left(\chi^1_n,\prod_{j=1}^n\tau_j\right)
\end{equation}
where $\left(\boxtimes_{j=1}^n\mathcal{C}(\chi_1^1,\tau_j)\right)_0$ is the braided fusion subcategory generated by the unique simple object of maximal dimension.  The braiding data for the righthand side of the equivalence in (\ref{right}) is given by $q:E_n\times E_n\to\mathbb{C}^\times$ defined on the basis $g_j\in\mathcal{O}(\mathcal{C}(\chi_1^1,\tau_j)_\mathrm{pt})$ as $q(g_j):=q_j(g_j)$, and $\alpha:=\prod_{j=1}^n\alpha_j$.
\end{lemma}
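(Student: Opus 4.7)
The plan is to identify the subcategory $(\boxtimes_{j=1}^n\mathcal{C}(\chi_1^1,\tau_j))_0$ concretely, recognize it as a Tambara-Yamagami fusion category with the claimed data $(\chi_n^1,\prod_j\tau_j)$, and then read off the braiding data from the Deligne product braiding on the factors.

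First I would list the simple objects. Write the simples of $\mathcal{C}(\chi_1^1,\tau_j)$ as $e_j,g_j,x_j$; then the unique simple of maximal Frobenius-Perron dimension in $\mathcal{D}:=\boxtimes_{j=1}^n\mathcal{C}(\chi_1^1,\tau_j)$ is $X:=x_1\boxtimes\cdots\boxtimes x_n$, with $\mathrm{FPdim}(X)=2^{n/2}$ and $X\cong X^\ast$ because each $x_j$ is self-dual. Applying the Ising fusion rules factorwise yields $X\otimes X\cong\bigoplus_{g\in E_n}g$, where I identify $E_n\subset\mathcal{O}(\mathcal{D}_{\mathrm{pt}})$ with the subgroup generated by the images of the $g_j$, and $g\otimes X\cong X$ for every $g\in E_n$. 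Hence $X\otimes X\otimes X\cong 2^nX$ produces no further simples, so the $\otimes$-subcategory generated by $X$ has exactly $2^n+1$ isomorphism classes of simples and the Tambara-Yamagami fusion rules for the group $E_n$.

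Next I would identify the Tambara-Yamagami data. The bicharacter $\chi:E_n\times E_n\to\mathbb{C}^\times$ is determined by the associator $a_{g,X,h}:(g\otimes X)\otimes h\to g\otimes(X\otimes h)$ for $g,h\in E_n$, and in the Deligne product this factors tensorand-wise. The only contributions come from Ising factors in which both $g$ and $h$ act nontrivially, so on basis elements $\chi(g_i,g_i)=\chi_1^1(g_i,g_i)=-1$ and $\chi(g_i,g_j)=1$ for $i\neq j$; bilinearity then recovers $\chi_n^1$ in the normalization of Section \ref{TY}. The extension sign of the subcategory is read off from $a_{X,X,X}$, which is the Deligne product of the Ising associators $a_{x_j,x_j,x_j}$, giving $\tau=\prod_j\tau_j$. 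By the Tambara-Yamagami classification \cite[Definition 3.1]{MR1659954}, the subcategory is therefore equivalent to $\mathcal{C}(\chi_n^1,\prod_j\tau_j)$ as a fusion category.

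To match the braidings, recall from Section \ref{TY} that $q(g)$ is the scalar by which the double braiding of $g\in E_n$ with the noninvertible simple acts, and that $\alpha$ is the scalar by which $c_{X,X}$ acts on the summand indexed by $e\in E_n$. Under the Deligne product braiding, the double braiding of the basis element $g_i=e_1\boxtimes\cdots\boxtimes g_i\boxtimes\cdots\boxtimes e_n$ with $X$ is a product of double braidings in each Ising tensorand; all are trivial except the $i$-th, which contributes $q_i(g_i)$, so $q(g_i)=q_i(g_i)$ as claimed. Likewise, the projection of $c_{X,X}$ onto $e_1\boxtimes\cdots\boxtimes e_n$ factors as the Deligne product of the projections of $c_{x_j,x_j}$ onto $e_j$, which gives $\alpha=\prod_{j=1}^n\alpha_j$. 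The main computational checkpoint is the associator analysis yielding $\chi$ and $\tau$; once those invariants are pinned down, the braiding data follows directly from the multiplicativity of the Deligne product braiding.
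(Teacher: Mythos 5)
Your proof is correct, and it takes a genuinely different route from the paper's. The paper identifies $\chi=\chi_n^1$ indirectly: it notes that the self-braiding of $g_j$ forces $\chi(g_j,g_j)=-1$ and then appeals to the classification of symmetric nondegenerate bilinear forms on $E_n$ \cite[Section 5]{MR156890}; it then extracts $\tau=\prod_j\tau_j$ not from the associator but as a by-product of the braiding computation, comparing $\alpha^2=\tau\sum_{g}q(g)$ with $\prod_j\alpha_j^2=\prod_j\tau_j(1+q_j(g_j))$. You instead pin down the underlying fusion-category data $(\chi,\tau)$ directly from the Deligne-product associator ($a_{g,X,h}$ gives all of $\chi$, including the off-diagonal values $\chi(g_i,g_j)=1$ for $i\neq j$, and $a_{X,X,X}$ gives $\tau$), and only afterwards read off the braiding data $(q,\alpha)$ from the multiplicativity of the product braiding. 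Your approach is arguably cleaner conceptually — it separates the fusion-category equivalence from the braided equivalence and avoids the appeal to the classification of bilinear forms — whereas the paper's version keeps the argument entirely inside braiding-theoretic computations and gets $\tau$ and $\alpha$ in one stroke.

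One small imprecision worth fixing: in the paper's conventions (Section \ref{TY}, following Siehler), $q(g)=\sigma_1(g)$ is the \emph{single} braiding $c_{g,x}$ on the noninvertible simple, not the double braiding. You describe $q(g)$ as arising from the double braiding. This does not affect your conclusion, since both $c_{g,X}$ and $c_{X,g}c_{g,X}$ factor as Deligne products of the corresponding Ising morphisms and the off-factor contributions are trivial, so either way one obtains $q(g_i)=q_i(g_i)$; but the identification should be stated in a way consistent with the paper's conventions.
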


\begin{proof}
Set $\mathcal{D}:=\boxtimes_{j=1}^n\mathcal{C}(\chi_1^1,\tau_j)$ and let $x_j\in\mathcal{O}(\mathcal{C}(\chi_1^1,\tau_j))$ be the noninvertible simple object for all $1\leq j\leq n$.  As Frobenius-Perron dimension is multiplicative across $\boxtimes$, there exists a unique simple object $x:=x_1\boxtimes\cdots\boxtimes x_n\in\mathcal{O}(\mathcal{D})$ of squared Frobenius-Perron dimension $2^n$.  Therefore $g\otimes x\cong x$ for all $g\in\mathcal{O}(\mathcal{D}_\mathrm{pt})$.   Hence $x\otimes x\cong \oplus_{g\in\mathcal{O}(\mathcal{D}_\mathrm{pt})}g$, and $x$ $\otimes$-generates a braided fusion subcategory $\mathcal{D}_0$ braided equivalent to an $E_n$ Tambara-Yamagami fusion category $\mathcal{C}(\chi,\tau)$ corresponding to some $\chi:E_n\times E_n\to\mathbb{C}^\times$ and $\tau=\pm1/2^{n/2}$.  Since $\mathcal{D}_0$ is braided, $\chi(g_j,g_j)=-1$ for $1\leq j\leq n$ as this is the braiding of $g_j$ with itself in $\mathcal{C}(\chi_1^1,\tau_j)$.  By the classification of symmetric nondegenerate bilinear forms on $E_n$ \cite[Section 5]{MR156890}, we must have $\chi=\chi_n^1$.  Now recall that by definition the square of the braiding of $x_j$ with itself is multiplication by $\alpha^2_j=\tau_j(1+q_j(g_j))$ for all $1\leq j\leq n$, and the square of the braiding of $x$ with itself is multiplication by $\alpha^2=\tau\sum_{g\in\mathcal{O}(\mathcal{D}_\mathrm{pt})}q(g)$.  Since $\mathcal{C}(\chi_1^1,\tau_j)$ for $1\leq j\leq n$ centralize one another pairwise, then $\alpha=\alpha_1\cdots\alpha_n$.  This implies
\begin{equation}
\tau\sum_{g\in\mathcal{O}(\mathcal{D}_\mathrm{pt})}q(g)=\alpha^2=\prod_{j=1}^n\alpha_j^2=\prod_{j=1}^n\tau_j\left(1+q_j(g_j)\right)=\prod_{j=1}^n\tau_j\sum_{g\in \mathcal{O}(\mathcal{D}_\mathrm{pt})}q(g),
\end{equation}
when $q$ is defined as in the statement of the lemma.  Moreover $\tau=\prod_{j=1}^n\tau_j$.
\end{proof}

\begin{proposition}
Let $\mathcal{C}:=\mathcal{C}(\chi_n^1,\tau)$ be given for $n\in\mathbb{Z}_{\geq1}$.  Then there exists an $n$-fold Deligne product of Ising braided fusion categories which is a minimal nondegenerate cover of $\mathcal{C}$.
\end{proposition}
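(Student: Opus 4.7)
The strategy is to invoke Lemma \ref{explicit} with carefully chosen Ising factors. Let $\mathcal{C} = \mathcal{C}(\chi_n^1,\tau)$ with braiding data $(q,\alpha)$ as in Section \ref{TY}. The plan is to construct Ising braided fusion categories $\mathcal{I}_j := \mathcal{C}(\chi_1^1,\tau_j)$ with data $(q_j,\alpha_j)$ for $1\leq j\leq n$ that assemble to exactly $(\tau,q,\alpha)$, so that Lemma \ref{explicit} identifies $\mathcal{C}$ with the subcategory $(\boxtimes_j\mathcal{I}_j)_0$. The Deligne product $\mathcal{D}:=\boxtimes_j\mathcal{I}_j$ is then a cover of $\mathcal{C}$ which is nondegenerate because each Ising factor is and nondegeneracy is preserved under $\boxtimes$; its dimension $\mathrm{FPdim}(\mathcal{D})=4^n=2^{n+1}\cdot 2^{n-1}=\mathrm{FPdim}(\mathcal{C})\mathrm{FPdim}(C_\mathcal{C}(\mathcal{C}))$ is then the theoretical minimum.

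Assembling the pointed data is straightforward. Since $\tau=\pm 1/2^{n/2}$, factor $\tau=\prod_j\tau_j$ with each $\tau_j=\pm 1/\sqrt{2}$. For $q$, set $q_j(g_j):=q(g_j)$ on the chosen basis; this is valid Ising input because $q(g_j)^2=\chi_n^1(g_j,g_j)=-1$ forces $q(g_j)\in\{\pm i\}$, precisely one of the two Ising possibilities in Figure \ref{fig:ising}. The quadratic form $q$ is then determined on all of $E_n$ by its values on the generators together with the bicharacter $\chi_n^1$, matching the $q$ produced by Lemma \ref{explicit} on the target side.

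The main obstacle, and what I expect to be the only real subtlety, is fixing the sign of $\alpha$. Each $\alpha_j$ is only determined up to sign by $\alpha_j^2=\tau_j(1+q_j(g_j))$, and one needs $\prod_j\alpha_j=\alpha$ rather than $-\alpha$. To handle this, I would exploit the diagonal form $\chi_n^1(g_j,g_k)=(-1)^{\delta_{jk}}$ in the standard basis (which is implicit in the product structure used in Lemma \ref{explicit}) to write $q(g)=\prod_j q(g_j)^{\epsilon_j}$ for any $g=\sum\epsilon_j g_j\in E_n$, yielding $\sum_{g\in E_n}q(g)=\prod_j(1+q(g_j))$. Combined with the defining relation $\alpha^2=\tau\sum_g q(g)$ from Section \ref{TY}, this gives $\alpha^2=\prod_j\tau_j(1+q_j(g_j))=\prod_j\alpha_j^2$, so $\prod_j\alpha_j=\pm\alpha$. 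Flipping the sign of a single $\alpha_j$ (which simply amounts to replacing one Ising factor by the other one with the same $\tau_j,q_j$) negates the product, so one of the sign choices produces $\prod_j\alpha_j=\alpha$ exactly. Applying Lemma \ref{explicit} to the resulting $\mathcal{I}_j$ then yields a braided equivalence $(\boxtimes_j\mathcal{I}_j)_0\simeq\mathcal{C}$, completing the construction.
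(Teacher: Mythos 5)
Your proposal is correct and follows essentially the same route as the paper: choose $\tau_j$ factoring $\tau$, set $q_j(g_j):=q(g_j)$, observe $\alpha^2=\prod_j\alpha_j^2$, and fix the residual sign by flipping one $\alpha_j$, then invoke Lemma \ref{explicit}. You additionally spell out the dimension count $4^n=2^{n+1}\cdot 2^{n-1}$ and the nondegeneracy of the Deligne product, which the paper's proof leaves implicit; otherwise the two arguments coincide.
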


\begin{proof}
For $n=1$ the statement is trivial so let $n\geq2$.  Consider any $n$-fold Deligne product $\mathcal{D}:=\boxtimes_{j=1}^n\mathcal{C}(\chi_1^1,\tau_j)$.  We need only show that there exists some choice of $\tau_j$ and braiding data $q_j$ and $\alpha_j$ for $1\leq j\leq n$ such that $\mathcal{C}$ and $\mathcal{D}_0$ are braided equivalent where $\mathcal{D}_0$ is defined in Lemma \ref{explicit}.  To this end, define $\tau_j:=1/\sqrt{2}$ for all $1\leq j\leq n-1$, $\tau_n:=\tau|\tau|^{-1}\sqrt{2}$.  Then $\mathcal{D}_0$ is equivalent to $\mathcal{C}$ as a fusion category by Lemma \ref{explicit}.  For the braiding, choose a basis $g_1,\ldots,g_n$ of $E_n$ and define $q_j:E_1\times E_1\to\mathbb{C}^\times$ by $q_j(g_j)=q(g_j)$.  This ensures $\alpha^2=\prod_{j=1}^n\alpha_j^2$ as in the proof of Lemma \ref{explicit}, hence $\alpha=\pm\prod_{j=1}^n\alpha_j$.  Therefore if $\alpha_1,\ldots,\alpha_n$ are arbitrary and $\alpha=\prod_{j=1}^n\alpha_j$, we are done, otherwise, switch $\alpha_1\mapsto-\alpha_1$.
\end{proof}

\begin{question}
Does there exist a minimal nondegenerate cover $\mathcal{D}$ of $\mathcal{C}(\chi_n^1,\tau)$ for some $n\in\mathbb{Z}_{\geq1}$ and $\tau=\pm1/2^{n/2}$ which is not braided equivalent to an $n$-fold Deligne product of Ising braided fusion categories?
\end{question}

\section{Braided near-group fusion categories}\label{near}

A fusion category is \emph{near-group} if there exists exactly one isomorphism class of non-invertible objects.  This definition is more general than Tambara-Yamagami since such a fusion category may have a trivial universal grading.  Near-group fusion categories which possess a braiding were classified by J.\ Thornton \cite[Theorem III.4.6]{MR3078486}.  Surprisingly, there are only 7 braided near-group fusion categories up to braided equivalence which are not symmetrically braided, or Tambara-Yamagami (Section \ref{TY}).  These are four of rank 2, two of rank 3, and one of rank 4. 

\begin{note}\label{seitz}
G.\ Seitz \cite{MR222160} classified finite groups having a unique isomorphism class of irreducible representations with dimension greater than 1, giving a complete classification of symmetrically braided near-group fusion categories up to braided equivalence.
\end{note}

\begin{example}\label{buv}
The 7 nonsymmetrically braided near-group group fusion categories which are not Tambara-Yamagami are easily found in nature.  The four examples of rank 2 are described in detail in \cite{ostrik} and all have nondegenerate braidings.  Therefore they are their own unique minimal nondegenerate cover and extension.  They can be constructed from the category $\mathcal{C}(\mathfrak{sl}_2,3)_\mathrm{ad}$ in the notation of \cite{MR4079742}.
\par The two examples of rank 3 have the fusion rules of $\mathrm{Rep}(S_3)$ (but nonsymmetric braidings) where $S_3$ is the symmetric group on $3$ elements, and can be found as braided fusion subcategories of the untwisted double $\mathcal{Z}(\mathrm{Vec}_{S_3})$ $\otimes$-generated, respectively, by the simple objects $(g,\chi)$ where $g$ is any element of order 3 and $\chi$ is one of two nontrivial characters of degree 1 of the cyclic group $C_3$.  Their symmetric center is Tannakian of rank 2 and they each possess 2 inequivalent nondegenerate covers with Frobenius-Perron dimension $12$, which can be realized from $\mathcal{C}(\mathfrak{sl}_2,4)$ in the notation of \cite{MR4079742}.
\par The unique example of rank 4 has the fusion rules of $\mathrm{Rep}(A_4)$ and can be found as a braided fusion subcategory of the untwisted double $\mathcal{Z}(\mathrm{Vec}_{A_4})$ generated by the simple object $(g,\chi)$ where $g$ is any element of order 2 and $\chi$ is any character of degree 1 of $C_2^2$ with $\chi(g)=-1$.  Its symmetric center is Tannakian of rank 3 and it possesses 3 inequivalent nondegenerate covers with Frobenius-Perron dimension $36$, which can be constructed from $\mathcal{C}(\mathfrak{sl}_3,3)$ in the notation of \cite{MR4079742}.
\par The classification of fusion categories with the fusion rules of the rank 3 and 4 examples above dates back to \cite[Section 3]{MR1997336}.
\end{example}

The minimal nondegenerate covers for all the near-group fusion categories discussed so far are weakly integral, hence they have a unique positive spherical structure and can be considered as modular tensor categories (Section \ref{modular}).

\begin{theorem}\label{theorem}
Let $\mathcal{C}$ be a braided near-group fusion category.  Then $\mathcal{C}$ possesses a minimal modular extension if and only if $\mathcal{C}$ is braided equivalent to
\begin{itemize}
\item[\textnormal{(a)}] $\mathcal{Z}(\mathrm{Vec}_{Q_8}^\gamma)_\mathrm{ad}$ or $\mathcal{Z}(\mathrm{Vec}_{Q_8}^\gamma)_\mathrm{ad}^\mathrm{rev}$ for some generator $\gamma\in H^3(Q_8,\mathbb{C}^\times)$,
\item[\textnormal{(b)}] $\mathcal{C}(\chi_n^1,\tau)$ for some $n\in\mathbb{Z}_{\geq1}$ and $\tau=\pm1/2^{n/2}$ with arbitrary braiding data (Section \ref{TY}),
\item[\textnormal{(c)}] a symmetrically braided near-group fusion category (Note \ref{seitz}), or
\item[\textnormal{(d)}] one of the 7 nonsymmetrically braided near-group fusion categories in Example \ref{buv}.
\end{itemize}
\end{theorem}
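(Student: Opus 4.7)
The plan is to use Thornton's classification \cite[Theorem III.4.6]{MR3078486} as an organizational tool and then dispatch each resulting family to the appropriate result from earlier sections. Thornton's theorem partitions braided near-group fusion categories into three classes: the symmetrically braided ones, the Tambara-Yamagami braided fusion categories $\mathcal{C}(\chi_n^k,\tau)$, and the seven exceptional nonsymmetrically braided non-Tambara-Yamagami categories catalogued in Example \ref{buv}.

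For the symmetrically braided family, the Drinfeld center $\mathcal{Z}(\mathcal{C})$ is a nondegenerate cover of $\mathcal{C}$ with $\mathrm{FPdim}(\mathcal{Z}(\mathcal{C}))=\mathrm{FPdim}(\mathcal{C})^2=\mathrm{FPdim}(\mathcal{C})\mathrm{FPdim}(C_\mathcal{C}(\mathcal{C}))$ since $C_\mathcal{C}(\mathcal{C})=\mathcal{C}$, so every such category is a minimal nondegenerate cover candidate and lands in outcome (c). For the seven exceptional categories, Example \ref{buv} exhibits explicit minimal nondegenerate covers in each of the four rank-$2$, two rank-$3$, and one rank-$4$ cases, placing these categories in outcome (d).

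The bulk of the argument concerns the Tambara-Yamagami case. I would first dispose of the symmetrically braided $\mathcal{C}(\chi^k_n,\tau)$ (only possible for $k=0$), which are absorbed by outcome (c). The nonsymmetrically braided $\mathcal{C}(\chi_n^1,\tau)$ are handled by the construction of Section \ref{nonnon}: an $n$-fold Deligne product of appropriately chosen Ising braided fusion categories produces a minimal nondegenerate cover, giving outcome (b). Since $\chi^0$ only exists on $E_n$ for $n$ even, the final case is nonsymmetrically braided $\mathcal{C}(\chi_{2n}^0,\tau)$ for $n\geq 1$: Proposition \ref{bigprop} rules out the existence of any minimal nondegenerate cover when $n\geq 2$, and Lemma \ref{q8} identifies those equivalence classes with a minimal nondegenerate cover when $n=1$ as precisely $\mathcal{Z}(\mathrm{Vec}_{Q_8}^\gamma)_\mathrm{ad}$ for generators $\gamma\in H^3(Q_8,\mathbb{C}^\times)$, along with the reverse-braided variants, yielding outcome (a).

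The main obstacle has in fact already been overcome in the preceding sections: the nontrivial step is Proposition \ref{bigprop}, whose proof uses the $S$-matrix analysis of Lemma \ref{begin}, the integrality argument of Lemma \ref{int}, and the conductor bound for twisted doubles of extraspecial $2$-groups. What remains for this final theorem is purely organizational, namely verifying that outcomes (a)--(d) are disjoint and exhaust Thornton's list, which is immediate from rank and fusion rule data together with the observation that the seven exceptional categories of Example \ref{buv} are by definition not Tambara-Yamagami.
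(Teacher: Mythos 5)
Your proposal is correct and follows the same organizational strategy as the paper's proof, which likewise reduces to Thornton's classification and then dispatches each resulting family to the results of Sections \ref{interesting}, \ref{jen}, and Example \ref{buv}. Your write-up is in fact slightly more explicit than the paper's one-line proof, correctly spelling out that the $\chi_n^1$ case is settled by the Deligne product construction of Section \ref{nonnon}, which the paper's proof references only implicitly.
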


\begin{proof}
This is a culmination of the results of Section \ref{interesting}, Proposition \ref{bigprop}, \cite[Theorem III.4.6]{MR3078486}, and Example \ref{buv}.
\end{proof}


\section{Extraspecial $p$-groups and minimal nondegenerate covers}\label{futuro}

Here we extend the principal results of Section \ref{jen} to braided fusion categories whose fusion rules coincide with the character rings of extraspecial $p$-groups when $p$ is an odd prime (see Section \ref{extraspecial}).  Fusion and braided fusion categories of these Grothendieck equivalence classes have not been classified as they have for those when $p=2$, so a complete generalization of Section \ref{jen} is left for future research.

\begin{lemma}\label{wan}
Let $\mathcal{C}$ be a braided fusion category Grothendieck equivalent to $\mathrm{Rep}(p^{1+2n}_\pm)$ for an odd prime $p$.  Then $C_\mathcal{C}(\mathcal{C})$ is Tannakian, and either $C_\mathcal{C}(\mathcal{C})=\mathcal{C}$ or $C_\mathcal{C}(\mathcal{C})=\mathcal{C}_\mathrm{pt}$.\end{lemma}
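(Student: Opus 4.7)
The plan is to leverage the elementary fusion-rule structure to pin down $C_\mathcal{C}(\mathcal{C})$ directly. First I would identify the adjoint subcategory. From the fusion rules of Section \ref{extraspecial}, $g\otimes g^\ast\cong\mathbbm{1}$ for invertible $g$ and $x_h\otimes x_h^\ast\cong\bigoplus_{g\in\mathcal{O}(\mathcal{C}_\mathrm{pt})}g$ for noninvertible $x_h$, so the adjoint subcategory is $\otimes$-generated by invertibles alone, giving $\mathcal{C}_\mathrm{ad}=\mathcal{C}_\mathrm{pt}$. Combining this with the general containment $\mathcal{C}_\mathrm{pt}\subset C_\mathcal{C}(\mathcal{C}_\mathrm{ad})$ (see \cite[Corollary 8.22.8]{tcat}) yields $\mathcal{C}_\mathrm{pt}\subset C_\mathcal{C}(\mathcal{C}_\mathrm{pt})$, so $\mathcal{C}_\mathrm{pt}$ is symmetrically braided.

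Next I would promote ``symmetric'' to ``Tannakian'' using that $p$ is odd. Describing $\mathcal{C}_\mathrm{pt}$ by its pre-metric group $(A,q)$ with $A=(\mathbb{Z}/p\mathbb{Z})^{\oplus 2n}$, the condition $b_q\equiv 1$ forces $q$ to be a group homomorphism $A\to\mathbb{C}^\times$. Combining $q(-a)=q(a)$ (symmetry of the quadratic form) with $q(-a)=q(a)^{-1}$ (homomorphism property) forces $q(a)^2=1$, so $q$ takes values in $\{\pm1\}$. Since $|A|=p^{2n}$ is odd and $\{\pm1\}$ has order $2$, the homomorphism $q$ must be trivial; hence $\mathcal{C}_\mathrm{pt}$ is Tannakian and $\theta_g=1$ for every invertible $g$.

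Third, I would deduce $\mathcal{C}_\mathrm{pt}\subset C_\mathcal{C}(\mathcal{C})$ and the dichotomy. Since $g\otimes x_h\cong x_h$, the balancing equation gives
\begin{equation*}
S_{g,x_h}=\theta_g^{-1}\theta_{x_h}^{-1}\theta_{x_h}\dim(x_h)=\theta_g^{-1}p^n=p^n=\dim(g)\dim(x_h),
\end{equation*}
so every invertible $g$ centralizes every $x_h$; together with $\mathcal{C}_\mathrm{pt}$ itself being symmetric this yields $\mathcal{C}_\mathrm{pt}\subset C_\mathcal{C}(\mathcal{C})$. Suppose now $x_h\in C_\mathcal{C}(\mathcal{C})$ for some $h$. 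The fusion subcategory $\otimes$-generated by $x_h$ contains $\mathcal{C}_\mathrm{pt}$ (from the summands of $x_h\otimes x_h^\ast$) as well as $x_h$ itself, so its Frobenius-Perron dimension is at least $p^{2n}+p^{2n}=2p^{2n}$; since this dimension divides $\mathrm{FPdim}(\mathcal{C})=p^{2n+1}$ by \cite[Proposition 8.15]{ENO} and $p>2$, the only possibility is $p^{2n+1}$, so $C_\mathcal{C}(\mathcal{C})=\mathcal{C}$. Thus $C_\mathcal{C}(\mathcal{C})$ is either $\mathcal{C}_\mathrm{pt}$ or $\mathcal{C}$.

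Finally, for the Tannakian conclusion in the second alternative: if $C_\mathcal{C}(\mathcal{C})=\mathcal{C}$ then $\mathcal{C}$ is symmetrically braided, so $\mathcal{C}\simeq\mathrm{Rep}(G,z)$ by Deligne, and Lemma \ref{extraspeciallem} forces $G$ to be an extraspecial $p$-group of odd order $p^{2n+1}$. Such a $G$ has no element of order $2$, so $z=e$ and $\mathcal{C}$ is Tannakian. The main obstacle is packaging the ``symmetric $\Rightarrow$ Tannakian'' step for $\mathcal{C}_\mathrm{pt}$ cleanly, which requires combining the defining identities of the quadratic form and its bicharacter with the odd order of $A$; once this is in place, everything else reduces to bookkeeping with the fusion rules and the balancing equation.
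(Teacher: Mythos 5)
Your proof is correct and reaches the same conclusion, but takes a genuinely different route in two places. To see that $\mathcal{C}_\mathrm{pt}$ is symmetrically braided, the paper considers the cyclic pointed subcategory $\mathcal{D}$ generated by a single invertible $g$: since $\mathrm{FPdim}(\mathcal{D})=p$ is prime, $\mathcal{D}$ is either symmetric or nondegenerate, and the nondegenerate case forces a nontrivial Deligne product factorization of $\mathcal{C}$ by M\"uger's theorem, which the fusion rules forbid because each noninvertible object $\otimes$-generates all of $\mathcal{C}$. You instead use the fusion-rule observation $\mathcal{C}_\mathrm{ad}=\mathcal{C}_\mathrm{pt}$ together with the general containment $\mathcal{C}_\mathrm{pt}\subset C_\mathcal{C}(\mathcal{C}_\mathrm{ad})$ for braided fusion categories (this is essentially \cite[Lemma 6.3]{nilgelaki} rather than \cite[Corollary 8.22.8]{tcat}, which gives the stronger equality only under nondegeneracy), which is arguably cleaner. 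For the Tannakian conclusion, the paper applies \cite[Corollary 9.9.32(i)]{tcat} directly to $C_\mathcal{C}(\mathcal{C})$: any symmetrically braided fusion category of odd Frobenius--Perron dimension is Tannakian, covering both branches of the dichotomy at once. You verify the pointed case by hand via the pre-metric group $(A,q)$ and then treat the branch $C_\mathcal{C}(\mathcal{C})=\mathcal{C}$ separately through Deligne's classification and Lemma \ref{extraspeciallem}; both are correct, the paper's one-shot citation being more economical while your computation is more self-contained. The remaining steps, namely the balancing-equation argument for $\mathcal{C}_\mathrm{pt}\subset C_\mathcal{C}(\mathcal{C})$ and the dichotomy from the fact that every noninvertible simple object $\otimes$-generates all of $\mathcal{C}$ (which you recast as a divisibility constraint via \cite[Proposition 8.15]{ENO}), coincide with the paper in substance.
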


\begin{proof}
We know $C_\mathcal{C}(\mathcal{C})$ is Tannakian by \cite[Corollary 9.9.32(i)]{tcat} since it is symmetrically braided and $p$ is odd.  Any $g\in\mathcal{O}(\mathcal{C}_\mathrm{pt})$ generates a pointed braided fusion subcategory $\mathcal{D}$ of dimension $p$.  Since $p$ is prime, $C_\mathcal{D}(\mathcal{D})=\mathcal{D}$ or $C_\mathcal{D}(\mathcal{D})$ is trivial.  In the latter case $\mathcal{D}$ is nondegenerately braided and $\mathcal{C}$ factors as a nontrivial Deligne product \cite[Theorem 4.2]{mug1}.  But the fusion rules cannot factor since each of the noninvertible objects $\otimes$-generates all of $\mathcal{C}$, thus $\mathcal{C}_\mathrm{pt}$ is symmetrically braided and Tannakian.  Lastly note that when equipped with its unique positive spherical structure, the balancing equation implies that $g\in\mathcal{O}(\mathcal{C}_\mathrm{pt})$ centralize all noninvertible simple objects as they are fixed points of the $\otimes$-action.  Therefore $\mathcal{C}_\mathrm{pt}\subset C_\mathcal{C}(\mathcal{C})$, which proves our claim since $C_\mathcal{C}(\mathcal{C})\subset\mathcal{C}$ is a fusion subcategory.
\end{proof}

\begin{proposition}\label{3andon}
Let $\mathcal{C}$ be a nonsymmetrically braided fusion category with the fusion rules of the character ring of $p^{1+2n}_\pm$ for an odd prime $p$.  If there exists a nondegenerate cover $\mathcal{D}$ of $\mathcal{C}$ with $\mathrm{FPdim}(\mathcal{D})=p^{4n+1}$, then there exists an extraspecial $p$-group $G$ and $3$-cocycle $\omega\in H^3(G,\mathbb{C}^\times)$ such that $\mathcal{C}\simeq\mathcal{Z}(\mathrm{Vec}_G^\omega)_\mathrm{ad}$ is a braided equivalence.
\end{proposition}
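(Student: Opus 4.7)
The plan is to adapt the three-step skeleton of Section \ref{jen} (Lemmas \ref{begin}, \ref{int}, and Proposition \ref{bigprop}) to the odd-prime setting, with the characterization of twisted Drinfeld centers via Lagrangian subcategories \cite[Theorem 4.5]{drinfeld2007grouptheoretical} replacing the citation used in Proposition \ref{bigprop}.

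First, I would establish the structural analog of Lemma \ref{begin}. Since $\mathrm{FPdim}(\mathcal{D})=p^{4n+1}$ is a prime power, $\mathcal{D}$ is nilpotent by \cite[Example 4.5]{nilgelaki}, and being weakly integral it carries its unique positive spherical structure. Lemma \ref{wan} gives $C_\mathcal{C}(\mathcal{C})=\mathcal{C}_\mathrm{pt}$, Tannakian of dimension $p^{2n}$, with regular algebra $R$. Noting that $\mathcal{C}_\mathrm{ad}=\mathcal{C}_\mathrm{pt}$ (since $x_g\otimes x_g^\ast\cong\bigoplus_h h$), the universal grading identity $\mathrm{FPdim}(\mathcal{D}_\mathrm{pt})\cdot\mathrm{FPdim}(\mathcal{D}_\mathrm{ad})=p^{4n+1}$ together with $\mathcal{C}_\mathrm{pt}\subset\mathcal{D}_\mathrm{pt}$ and $\mathcal{C}_\mathrm{pt}\subset\mathcal{D}_\mathrm{ad}$ restricts $\mathrm{FPdim}(\mathcal{D}_\mathrm{pt})\in\{p^{2n},p^{2n+1}\}$. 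The upper value gives $\mathcal{C}=C_\mathcal{D}(\mathcal{C}_\mathrm{pt})\subset\mathcal{D}_\mathrm{pt}$, impossible since $\mathcal{C}$ has noninvertible simples; therefore $\mathcal{D}_\mathrm{pt}=\mathcal{C}_\mathrm{pt}$ and $\mathcal{D}_\mathrm{ad}=\mathcal{C}$. By \cite[Corollary 3.32]{DMNO}, $\mathcal{D}_R^0$ is a pointed modular tensor category of dimension $p$, and an endomorphism count shows the free modules decompose as $R\otimes x_g\cong p^n\tilde{x}_g$ for distinct simples $\tilde{x}_g\in\mathcal{O}(\mathcal{D}_R^0)$ indexed by $g\in(\mathbb{Z}/p\mathbb{Z})\setminus\{0\}$, whose twists are inherited as $\theta_{\tilde{x}_g}=\theta_{x_g}$.

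Second, I would prove $\mathcal{D}$ is integral, paralleling Lemma \ref{int}. By \cite[Proposition 1.4]{2019arXiv191212260G}, every simple $y\in\mathcal{O}(\mathcal{D})$ satisfies $\mathbb{Q}(\mathrm{FPdim}(y))\in\{\mathbb{Q},\mathbb{Q}(\sqrt{p})\}$. Supposing some $z\in\mathcal{O}(\mathcal{D})\setminus\mathcal{O}(\mathcal{C})$ has irrational dimension, the subcategory $\mathcal{E}\subset\mathcal{D}$ generated by $\mathcal{C}$ together with the universally graded component containing $z$ properly contains $\mathcal{C}$, whence Equation (\ref{wanpisu}) gives $C_\mathcal{D}(\mathcal{E})\subsetneq\mathcal{C}_\mathrm{pt}$ pointed; let $R'$ be its regular algebra. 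Then $\mathcal{D}_{R'}^0$ contains simples of Frobenius-Perron dimensions drawn from $\{1,\sqrt{p},p^n\}$ (namely the images of invertibles, of $z$, and of $x_g$), and tracking the multiplicities as in the proof of Lemma \ref{int} produces an integral subcategory $(\mathcal{D}_{R'}^0)_\mathbb{Q}$ whose Frobenius-Perron dimension fails to divide $\mathrm{FPdim}(\mathcal{D}_{R'}^0)$, contradicting \cite[Proposition 8.15]{ENO}.

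Third, I would assemble the Drinfeld-center conclusion. With $\mathcal{D}$ integral, the category $\mathcal{D}\boxtimes(\mathcal{D}_R^0)^\mathrm{rev}$ is integral nondegenerate of dimension $(p^{2n+1})^2$. The objects $\mathcal{C}_\mathrm{pt}\boxtimes\mathbbm{1}$ together with $\{x_g\boxtimes\tilde{x}_g^\mathrm{rev}:g\in(\mathbb{Z}/p\mathbb{Z})\setminus\{0\}\}$ all have trivial twist (the reversal inverts $\theta_{\tilde{x}_g}=\theta_{x_g}$, cancelling the twist of $x_g$) and satisfy the fusion rules of the character ring of an extraspecial $p$-group; the total Frobenius-Perron dimension of the subcategory they generate is $p^{2n+1}$, so they determine a symmetric fusion subcategory of odd dimension, which is automatically Tannakian by \cite[Corollary 9.9.32(i)]{tcat} and hence a Lagrangian. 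By \cite[Theorem 4.5]{drinfeld2007grouptheoretical}, $\mathcal{D}\boxtimes(\mathcal{D}_R^0)^\mathrm{rev}\simeq\mathcal{Z}(\mathrm{Vec}_G^\omega)$ for some $G$ of order $p^{2n+1}$ and $\omega\in H^3(G,\mathbb{C}^\times)$, and Lemma \ref{extraspeciallem} applied to the Lagrangian $\mathrm{Rep}(G)$ forces $G$ to be extraspecial. Finally $(\mathcal{D}_R^0)^\mathrm{rev}$ is pointed of prime rank $p$ and thus has trivial adjoint subcategory, so $\mathcal{C}=\mathcal{D}_\mathrm{ad}=(\mathcal{D}\boxtimes(\mathcal{D}_R^0)^\mathrm{rev})_\mathrm{ad}=\mathcal{Z}(\mathrm{Vec}_G^\omega)_\mathrm{ad}$ as a braided equivalence.

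The principal obstacle is the integrality step: for $p=2$ the $\sqrt{2}$-case could be knocked out by a brief dimension-list check (the list $1,1,2,\sqrt{2}$ produces $(\mathcal{D}_R^0)_\mathbb{Q}$ of dimension $6\nmid 8$), but for odd $p$ the potential $\sqrt{p}$-dimensional simples mingle with multiplicities $p^n$ that scale with the parameter $n$, and one must keep careful track of the decomposition of $R'\otimes w$ into local modules before the arithmetic obstruction of \cite[Proposition 8.15]{ENO} becomes visible. A secondary but routine verification is that the proposed Lagrangian generators pairwise centralize and have trivial self-twist, which follows from the balancing equation together with the definition of the reverse braiding on $(\mathcal{D}_R^0)^\mathrm{rev}$.
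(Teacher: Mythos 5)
Your overall skeleton is sound and lands in the right place, but you have misdiagnosed the difficulty.  The ``principal obstacle'' you flag --- proving integrality of $\mathcal{D}$ by adapting Lemma~\ref{int} --- does not exist for odd $p$.  A weakly integral fusion category which is not integral admits a faithful grading by a nontrivial elementary abelian $2$-group with trivial component $\mathcal{C}_\mathbb{Q}$, so its Frobenius--Perron dimension is even; since $\mathrm{FPdim}(\mathcal{D})=p^{4n+1}$ is odd, $\mathcal{D}$ is automatically integral.  The entire second step of your proposal, including the worry about $\sqrt{p}$-dimensional simples and multiplicities scaling with $n$, is vacuous, and the paper correspondingly skips any integrality argument in this case.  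This is precisely why the odd-prime case is stated as a single proposition rather than mirroring the Lemma~\ref{begin}/Lemma~\ref{int}/Proposition~\ref{bigprop} chain.

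The remainder of your argument is a legitimate variant of the paper's.  You construct the Lagrangian subcategory of $\mathcal{D}\boxtimes(\mathcal{D}_R^0)^{\mathrm{rev}}$ by hand and apply the Lagrangian reconstruction theorem directly, then identify $G$ via Lemma~\ref{extraspeciallem}; the paper instead invokes \cite[Theorem 1.3]{drinfeld2007grouptheoretical} (integral nilpotent modular categories are twisted doubles) to obtain the equivalence $\mathcal{D}\boxtimes\mathcal{P}\simeq\mathcal{Z}(\mathrm{Vec}_G^\omega)$ first, and only afterward exhibits the Lagrangian to pin down $G$ as extraspecial.  Both routes close the argument; yours has the minor advantage of conflating two invocations of machinery into one, while the paper's is somewhat shorter in prose because it never needs to verify that $\mathcal{D}_\mathrm{ad}=\mathcal{C}$ explicitly (a dimension count on the restriction of the equivalence $F$ to $\mathcal{C}$ suffices).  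Your final identification $\mathcal{C}=\mathcal{D}_\mathrm{ad}=(\mathcal{D}\boxtimes(\mathcal{D}_R^0)^{\mathrm{rev}})_\mathrm{ad}$, which does use $\mathcal{D}_\mathrm{ad}=\mathcal{C}$ from step one, is clean and correct; if you keep that route, make sure your step one genuinely establishes $\mathcal{D}_\mathrm{ad}=\mathcal{C}$ and not merely $\mathcal{D}_\mathrm{pt}=\mathcal{C}_\mathrm{pt}$, since you rely on it.
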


\begin{proof}
Lemma \ref{wan} implies that $\mathcal{C}_\mathrm{pt}$ is Tannakian since $p$ is odd; let $R$ be the regular algebra.  Then $\mathrm{FPdim}(\mathcal{D}_R^0)=p^{4n+1}/(p^{2n})^2=p$ \cite[Corollary 3.32]{DMNO}.  Let $\mathcal{P}:=(\mathcal{D}_R^0)^\mathrm{rev}$ so that $\xi(\mathcal{D})=\xi(\mathcal{P})^{-1}$.  This implies \cite[Theorem 1.3]{drinfeld2007grouptheoretical} there exists a finite $p$-group $G$, $\omega\in H^3(G,\mathbb{C}^\times)$, and a braided equivalence $F:\mathcal{D}\boxtimes\mathcal{P}\to\mathcal{Z}(\mathrm{Vec}_G^\omega)$.  As $F$ is monoidal, for all $X\in\mathcal{O}(\mathcal{C})$, $F(X)\in\mathcal{O}(\mathcal{Z}(\mathrm{Vec}_G^\omega)_\mathrm{ad})$.  But Equation (\ref{wanpisu}) implies
\begin{equation}
\mathrm{FPdim}(\mathcal{Z}(\mathrm{Vec}_G^\omega)_\mathrm{ad})=\dfrac{\mathrm{FPdim}(\mathcal{Z}(\mathrm{Vec}_G^\omega))}{\mathrm{FPdim}(\mathcal{Z}(\mathrm{Vec}_G^\omega)_\mathrm{pt})}=\dfrac{p^{4n+2}}{p^{2n+1}}=p^{2n+1}=\mathrm{FPdim}(\mathcal{C}),
\end{equation}
thus the restriction $\left.F\right|_\mathcal{C}:\mathcal{C}\to\mathcal{Z}(\mathrm{Vec}_G^\omega)_\mathrm{ad}$ is a braided equivalence by \cite[Proposition 6.3.3]{tcat}.

\par Finally, recall that $\mathcal{D}_R^0=\mathcal{P}^\mathrm{rev}$.  Each nontrivial simple object of $\mathcal{D}_R^0$ corresponds to the summands of the free $R$-module on some noninvertible $x\in\mathcal{O}(\mathcal{C})$ (see Section \ref{doubles}), so we can derive from this braided equivalence a bijection $\psi:\mathcal{O}(\mathcal{C})\setminus\mathcal{O}(\mathcal{C}_\mathrm{pt})\to\mathcal{O}(\mathcal{P})\setminus\{\mathbbm{1}\}$.  Note that the simple objects $g\in \mathcal{O}(\mathcal{C}_\mathrm{pt})$ and $x\boxtimes\psi(x)$ are closed under $\otimes$ and all have trivial twist, i.e.\ they $\otimes$-generate a Tannakian fusion subcategory of $\mathcal{D}\boxtimes\mathcal{P}$.  This subcategory is Lagrangian, with the fusion rules of the character ring of an extraspecial $p$-group.  Moreover by the reconstruction theorem for twisted doubles of finite groups \cite[Theorem 4.5]{drinfeld2007grouptheoretical}, the finite group $G$ can be chosen to be an extraspecial $p$-group by Lemma \ref{extraspeciallem}.
\end{proof}

\begin{acknowledgements*}
This research was partially funded by the Pacific Institute for the Mathematical Sciences.  We also thank Terry Gannon for his support through the preparation of this manuscript.
\end{acknowledgements*}

\bibliographystyle{plain}
\bibliography{bib}

\end{document}